\newcommand{\R}{\mathbb R}
\newcommand{\rn}{\R^N}
\newcommand{\vol}{{\rm Vol}}
\newcommand{\per}{{\rm Per}}
\newtheorem{thm}{Theorem}[section]
\newtheorem*{thm*}{Theorem}
\newtheorem{lem}[thm]{Lemma}
\newtheorem{rmk}[thm]{Remark}
\newtheorem{prop}[thm]{Proposition}
\newcommand*\circled[1]{\tikz[baseline=(char.base)]{
\node[shape=circle,draw,inner sep=2pt] (char) {#1};}}
\newcommand{\ct}[1]{\langle {#1}\rangle \lower.3ex\hbox{$_{t}$}}
\newcommand{\lt}[1]{[ {#1}] \lower.3ex\hbox{$_{t}$}}
\newcommand{\id}{{{\rm Id}}}
\newcommand{\C}{{\mathcal C}}
\newcommand{\A}{{\mathcal{A}}}
\newcommand{\B}{{\mathcal{B}}}
\newcommand{\0}{{\bf 0}}
\newcommand\restr[2]{{
\left.\kern-\nulldelimiterspace 
#1 
\vphantom{|} 
\right|_{#2} 
}}
\newcommand\jump[1]{\left[#1 \right]}
\newcommand{\ldue}{{l_2^E(B_R)(\hn\hn)}}
\newcommand{\overbar}[1]{\mkern 1.5mu\overline{\mkern-1.5mu#1\mkern-1.5mu}\mkern 1.5mu}
\newcommand{\dv}{{\rm{div}}}
\newcommand{\h}{{{\bf {h}}}}
\newcommand{\n}{{{\bf {n}}}}
\newcommand{\compt}{\Omega\setminus \overbar{\omega_t}}
\newcommand{\iO}{\int_\Omega}
\newcommand{\dn}{\partial_n}
\newcommand{\gr}{\nabla}
\newcommand{\lap}{\Delta}
\newcommand{\hn}{h_n}
\newcommand{\sg}{\sigma}
\newcommand{\intf}{{\partial \omega}}
\newcommand{\br}{{B_R}}
\newcommand{\bbr}{{\partial B_R}}
\newcommand{\compl}{{\Omega\setminus \overline{B_R}}}
\newcommand{\isl}{\int_{\bbr}}
\newcommand{\hh}{h_n^2}
\newcommand{\htau}{{\h_\tau}}
\title{Locally optimal configurations for the two-phase torsion problem in the ball}
\author{Lorenzo Cavallina \thanks{This research was partially supported by the Grant-in-Aid for Scientific Research (B) (\#26287020) Japan Society for the Promotion of Science}}
\date{}
\begin{document}
\maketitle

\begin{abstract}
We consider the unit ball $\Omega\subset \rn$ ($N\ge2$) filled with two materials with different conductivities. We perform shape derivatives up to the second order to find out precise information about locally optimal configurations with respect to the torsional rigidity functional. 
In particular we analyse the role played by the configuration obtained by putting a smaller concentric ball inside $\Omega$.
In this case the stress function admits an explicit form which is radially symmetric: this allows us to compute the sign of the second order shape derivative of the torsional rigidity functional with the aid of spherical harmonics.  
Depending on the ratio of the conductivities a symmetry breaking phenomenon occurs. 
\end{abstract}
\bigskip

\noindent{2010 {\it Mathematics Subject classification.} 49Q10}

\bigskip

\noindent {\it Keywords and phrases: torsion problem, optimization problem, elliptic PDE, shape derivative}

\tableofcontents

\section{Introduction}
We will start by considering the following two-phase problem.
Let $\Omega\subset \R^N$ ($N\ge 2$) be the unit open ball centered at the origin.
Fix $m\in (0,	\vol(\Omega))$, where here we denote the $N$-dimensional Lebesgue measure of a set by $\vol (\cdot)$ . 
Let $\omega\subset\subset \Omega$ be a sufficiently regular open set such that $\vol(\omega)=m$. 
Fix two positive constants $\sigma_-$, $\sigma_+$ and consider the following {\it distribution of conductivities}: $$\sigma:=\sigma_\omega:= \sigma_- \mathbbm{1}_\omega+\sigma_+ \mathbbm{1}_{\Omega\setminus \overbar{\omega}},$$
where by $\mathbbm{1}_A$ we denote the characteristic function of a set $A$ (i.e.\ $\mathbbm{1}_A(x)=1$ if $x\in A$ and vanishes otherwise).
\begin{figure}[h]
\centering
\includegraphics[scale=0.5]{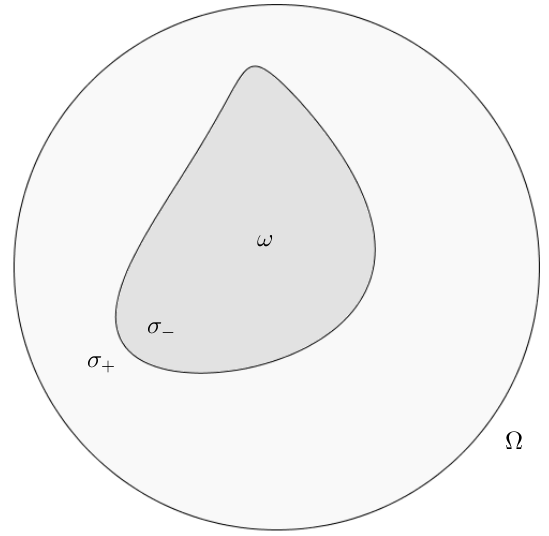}
\caption{Our problem setting}
\end{figure}

Consider the following boundary value problem:
\begin{equation}\label{pb}
\begin{cases}
-\dv{\left(\sigma_\omega \nabla u \right)}= 1 \quad&\text{ in } \Omega, \\
u=0 \quad&\text{ on }\partial \Omega.
\end{cases}
\end{equation}

We recall the weak formulation of \eqref{pb}: 
\begin{equation}\label{weak}
\int_\Omega \sigma_\omega \nabla u \cdot \nabla \varphi = \int_\Omega \varphi \;\;\;\;\text{ for all } \varphi\in H^1_0(\Omega).
\end{equation}

Moreover, since $\sigma_\omega$ is piecewise constant, we can rewrite \eqref{pb} as follows
\begin{equation}\label{pb2}
\begin{cases}
-\sigma_\omega \Delta u = 1 \quad&\text{ in } \omega\cup \left(\Omega\setminus\overbar{\omega}\right), \\
\sigma_-\partial_n u_-= \sigma_+\partial_n u_+ \quad&\text{ on } \partial \omega,\\
u=0 \quad&\text{ on } \partial \Omega,
\end{cases} 
\end{equation}
where the following notation is used: the symbol $\n$ is reserved for the outward unit normal and $\dn:=\frac{\partial}{\partial \n}$ denotes the usual normal derivative. Throughout the paper we will use the $+$ and $-$ subscripts to denote quantities in the two different phases (under this convention we have $(\sg_\omega)_+=\sg_+$ and $(\sg_\omega)_-=\sg_-$ and our notations are ``consistent" at least in this respect). The second equality of \eqref{pb2} has to be intended in the sense of traces. In the sequel, the notation $[f]:=f_+-f_-$ will be used to denote the jump of a function $f$ through the interface $\partial \omega$ (for example, following this convention, the second equality in \eqref{pb2} reads ``$\jump{\sg \dn u}=0$ on $\partial \omega$'').  

We consider the following {\em torsional rigidity functional}: 
$$
E(\omega):= \int_\Omega \sigma_\omega |\nabla u_\omega|^2=\int_\omega \sg_- |\gr u_\omega|^2+\int_{\Omega\setminus \bar{\omega}} \sg_+ |\gr u_\omega|^2=
\int_\Omega u_\omega, 
$$
where $u_\omega$ is the unique (weak) solution of \eqref{pb}.

Physically speaking, we imagine our ball $\Omega$ being filled up with two different materials and the constants $\sg_{\pm}$ represent how ``hard" they are. The second equality of \eqref{pb2}, which can be obtained by integrating by parts after splitting the integrals in \eqref{weak}, is usually referred to as {\em transmission condition} in the literature and can be interpreted as the continuity of the flux through the interface $\intf$.\\
The functional $E$, then, represents the torsional rigidity of an infinitely long composite beam. Our aim is to study (locally) optimal shapes of $\omega$ with respect to the functional $E$ under the fixed volume constraint.
The one-phase version of this problem was first studied by P\'olya. 
Let $D\subset \rn$ ($N\ge 2$) be a bounded Lipschitz domain. 
Define the following shape functional 
$$
\mathcal{E}(D):= \int_D |\gr u_D|^2, 
$$ 
where the function $u_D$ (usually called \emph{stress function}) is the unique solution to 
\begin{equation}
\begin{cases}
-\lap u = 1 \quad &\text{in } D,\\
u=0\quad &\text{on } \partial D.
\end{cases}
\end{equation}

The value $\mathcal{E}(D)$ represents the torsional rigidity of an infinitely long beam whose cross section is given by $D$. 
The following theorem (see \cite{polya}) tells us that beams with a spherical section are the ``most resistant''. 

\begin{thm}[P\'olya]\label{pol}
The ball maximizes $\mathcal{E}$ among all Lipschitz domains with fixed volume.
\end{thm}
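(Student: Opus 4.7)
The plan is to prove the theorem via Schwarz symmetrization, which is the classical approach due to P\'olya and Szeg\H{o}. Throughout, denote by $D^*$ the open ball centered at the origin with $\vol(D^*)=\vol(D)$, and by $v^*:D^*\to\R$ the Schwarz symmetric decreasing rearrangement of a nonnegative function $v$ defined on $D$.

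First I would exploit the dual variational characterization of $\mathcal{E}$. Testing \eqref{weak}'s one-phase analogue against $u_D$ itself yields the identity
\[
\mathcal{E}(D)=\int_D|\nabla u_D|^2=\int_D u_D,
\]
so that $u_D$ is the minimizer over $H^1_0(D)$ of $J(\varphi):=\tfrac12\int_D|\nabla\varphi|^2-\int_D \varphi$. A scaling argument in $\varphi$ then gives the Dirichlet-type formula
\[
\mathcal{E}(D)=\sup_{\varphi\in H^1_0(D)\setminus\{0\}}\frac{\left(\int_D \varphi\right)^2}{\int_D|\nabla\varphi|^2}.
\]
This variational expression is the key device because it decouples the role of the domain from that of the test function, enabling the comparison with $D^*$.

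Next, I would note that $u_D\ge 0$ by the maximum principle, extend it by zero outside $D$, and form its Schwarz symmetrization $u_D^*\in H^1_0(D^*)$. Two standard rearrangement facts are now invoked: equimeasurability gives $\int_{D^*}u_D^*=\int_D u_D$, and the P\'olya--Szeg\H{o} inequality gives
\[
\int_{D^*}|\nabla u_D^*|^2 \le \int_D|\nabla u_D|^2.
\]
Using $u_D^*$ as a test function in the variational characterization applied on $D^*$ then yields
\[
\mathcal{E}(D^*)\ \ge\ \frac{\left(\int_{D^*}u_D^*\right)^2}{\int_{D^*}|\nabla u_D^*|^2}\ \ge\ \frac{\left(\int_D u_D\right)^2}{\int_D|\nabla u_D|^2}\ =\ \mathcal{E}(D),
\]
where the last equality uses the identity recalled above. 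This establishes the inequality $\mathcal{E}(D)\le \mathcal{E}(D^*)$.

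The only serious ingredient is the P\'olya--Szeg\H{o} inequality, whose proof rests on the coarea formula and the classical isoperimetric inequality (the level sets $\{u_D>t\}$ have at most the perimeter of their symmetrized counterparts, of equal volume). For the purposes of this exposition, I would simply cite it as a well-known tool; the equality case (which, combined with characterization of equality in the isoperimetric inequality, yields that $D$ must itself be a ball up to a set of measure zero) is not needed for the statement as phrased.
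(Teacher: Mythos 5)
Your proof is correct and is precisely the classical P\'olya--Szeg\H{o} symmetrization argument: the dual variational characterization of $\mathcal{E}$, equimeasurability, and the P\'olya--Szeg\H{o} inequality fit together exactly as you describe. The paper does not reprove this result but only cites P\'olya's original work, which proceeds by the same symmetrization method, so there is nothing to reconcile; the only (harmless) detail worth a word is that $\int_{D^*}|\nabla u_D^*|^2>0$, which follows since $u_D^*\in H^1_0(D^*)$ cannot be constant while having positive integral.
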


Inspired by the result of Theorem \ref{pol} it is natural to expect radially symmetrical configurations to be optimizers of some kind for $E$ (at least in the local sense).
From now on we will consider $\omega:= B_R$ (the open ball centered at the origin, whose radius, $0<R<1$, is chosen to verify the volume constraint $|B_R|=m$) and use shape derivatives to analyze this configuration. 
This technique has already been used by Conca and Mahadevan in \cite{conca} and Dambrine and Kateb in \cite{sensitivity} for the minimization of the first Dirichlet eigenvalue in a similar two-phase setting ($\Omega$ being a ball) and it can be applied with ease to our case as well. 

A direct calculation shows that the function $u$, solution to \eqref{pb2} where $\omega=\br$, has the following expression:
\begin{equation}\label{uexpl}
u(x)= \begin{cases}
\frac{1-R^2}{2N\sg_+}+\frac{R^2-|x|^2}{2N\sg_-} \quad &\text{for }|x|\in [0,R],\\[1ex]
\frac{1-|x|^2}{2N\sg_+} \quad&\text{for } |x|\in [R,1].
\end{cases}
\end{equation}
In this paper we will use the following notation for Jacobian and Hessian matrix respectively.
$$
(D{\bf v})_{ij}:=\frac{\partial v_i}{\partial x_j}, \quad (D^2 f)_{ij}= \frac{\partial^2 f}{\partial x_i\partial x_j},
$$
for all smooth real valued function $f$ and vector field ${\bf v}=(v_1,\dots, v_N)$ defined on $\Omega$.
We will introduce some differential operators from tangential calculus that will be used in the sequel. For smooth $f$ and $\bf v$ defined on $\partial \omega$ we set
\begin{equation}\label{diffop}
\begin{aligned}
\gr_\tau f &:= \gr \widetilde{f}-(\gr \widetilde{f}\cdot \n)\n \quad &\text{( tangential gradient)},   \\
\dv_\tau {\bf v}&:= \dv \widetilde{{\bf v}}-\n\cdot \left( D \widetilde{{\bf v}}\n\right) &\text{ (tangential divergence)},
\end{aligned}
\end{equation}
where $\widetilde{f}$ and $\widetilde{\bf v}$ are some smooth extensions on the whole $\Omega$ of $f$ and $\bf v$ respectively. It is known that the differential operators defined in \eqref{diffop} do not depend on the choice of the extensions.   
Moreover we denote by $D_\tau {\bf v}$ the matrix whose $i$-th row is given by $\gr_\tau v_i$.
We define the (additive) mean curvature of $\partial \omega$ as $H:=\dv_\tau \n$ (cf. \cite{henrot, SG}). According to this definition, the mean curvature $H$ of $\partial \br$ is given by  $(N-1)/R$.  


A first key result of this paper is the following. 
\begin{thm}\label{UNO}
For all suitable perturbations that fix the volume (at least at first order), the first shape derivative of $E$ at $B_R$ vanishes. 
\end{thm}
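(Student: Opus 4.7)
The plan is to reduce Theorem \ref{UNO} to a Hadamard-type boundary integral formula for the first shape derivative of $E$, and then exploit the radial symmetry of the reference configuration $\omega = B_R$.

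For the perturbation $\omega_t = (\id + t\h)(B_R)$ generated by a smooth vector field $\h$, the standard shape-calculus theory for two-phase elliptic problems (cf.\ \cite{conca,sensitivity}) gives a representation
\begin{equation*}
E'(B_R)[\h] = \int_{\partial B_R} G \,(\h \cdot \n) \, d\sigma,
\end{equation*}
where the shape gradient $G$ is an algebraic expression in $\sigma_\pm$ and the one-sided traces of $u$ and $\nabla u$ on $\partial B_R$. The cleanest derivation proceeds via C\'ea's method applied to the self-adjoint Lagrangian $L(\omega, v) := \int_\Omega v - \tfrac{1}{2}\int_\Omega \sigma_\omega |\nabla v|^2$, whose maximum over $v\in H^1_0(\Omega)$ equals $\tfrac{1}{2} E(\omega)$ and is attained at $v=u_\omega$. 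Since $u_\omega$ is a critical point of $L(\omega, \cdot)$, the state-derivative contribution vanishes and $E'(\omega)[\h] = 2\,\partial_\omega L(\omega, u)[\h]$, where now $u$ is kept fixed. The derivative of the volume integrals $\int_{\omega_t}\sigma_-|\nabla u|^2$ and $\int_{\Omega\setminus\overline{\omega_t}}\sigma_+|\nabla u|^2$ is then computed by the classical transport formula, using the appropriate one-sided limits of $|\nabla u|^2$ at $\partial\omega$; the result is $G = [\sigma|\nabla u|^2]$, which can equivalently be rewritten purely in terms of the continuous quantities $\nabla_\tau u$ and $\sigma\,\partial_n u$ via the transmission conditions in \eqref{pb2}.

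Substituting the explicit solution \eqref{uexpl} into $G$, a direct computation gives $\nabla u_\pm(x) = -x/(N\sigma_\pm)$, so that on $\partial B_R$ both $\sigma_-|\nabla u_-|^2$ and $\sigma_+|\nabla u_+|^2$ are constant, and
$$G\big|_{\partial B_R} = \frac{R^2}{N^2}\left(\frac{1}{\sigma_+}-\frac{1}{\sigma_-}\right),$$
independent of the boundary point. Hence $G$ factors out of the boundary integral, and
$$E'(B_R)[\h] = \frac{R^2}{N^2}\left(\frac{1}{\sigma_+}-\frac{1}{\sigma_-}\right) \int_{\partial B_R} (\h\cdot\n)\,d\sigma.$$
The first-order volume-preservation condition reads $\left.\tfrac{d}{dt}\right|_{t=0}\vol(\omega_t) = \int_{\partial B_R}(\h\cdot\n)\,d\sigma = 0$, so the right-hand side vanishes and the theorem follows.

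The main obstacle is the careful justification of the Hadamard formula in the two-phase setting: because $\nabla u$ has a prescribed jump across $\partial\omega$, one has to be careful with one-sided traces when differentiating the volume integrals, and with the reduced regularity of the shape derivative $u'_\h$ when applying any adjoint argument. Once the shape gradient $G$ is in hand, the conclusion is essentially immediate from the radial symmetry of the reference state combined with the volume constraint; in particular, no quantitative information about $G$ beyond its constancy on $\partial B_R$ is actually needed.
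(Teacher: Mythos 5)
Your conclusion is correct and rests on the same two pillars as the paper's own argument: the first shape derivative of $E$ at $\br$ is a boundary integral $\isl G\,\hn$ whose density $G$ is constant on the sphere by radial symmetry, hence annihilated by the first-order volume condition \eqref{1st}. The route to the boundary formula is genuinely different, though. The paper (Theorem \ref{thm1}) applies Hadamard's formula \eqref{deri1} to the two representations $E=\int_\Omega u$ and $E=\int_\Omega\sg|\gr u|^2$ and removes the remaining volume term $2\int_\Omega\sg\gr u\cdot\gr u'$ by first proving the variational identity \eqref{this} for the shape derivative $u'$; that identity is the rigorous substitute for your assertion that ``the state-derivative contribution vanishes.'' Your C\'ea/Lagrangian shortcut is exactly where the one real delicacy sits: since $u'$ jumps across $\bbr$ (it is not in $H^1_0(\Omega)$, see \eqref{u'pb}), criticality of $L(\omega,\cdot)$ over $H^1_0(\Omega)$ does not by itself kill that term, and the ``transport formula with $u$ frozen'' is ambiguous at the interface because $\gr u$ has two distinct one-sided traces there --- depending on which trace you assign to the swept region you obtain $\jump{\sg|\gr u|^2}$, $(\sg_--\sg_+)|\gr u_+|^2$, or yet another expression. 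This is precisely the pitfall documented in the two-phase literature you cite, where the correct general shape gradient is written in terms of the continuous quantities $\gr_\tau u$ and $\sg\dn u$ and does not coincide with the naive jump $\jump{\sg|\gr u|^2}$; note also that your $G$ has the opposite sign to the paper's \eqref{that}. None of this affects the present theorem: on $\bbr$ one has $\gr_\tau u=0$ and every candidate density is constant, which is all your argument actually uses. But if you intend to reuse your expression for $G$ beyond the radial configuration --- in particular as the starting point for the second-order analysis, as the paper does in Theorem \ref{lduu} --- you need the rigorously derived formula, not the frozen-state one.
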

An improvement of Theorem \ref{UNO} is given by the following precise result (obtained by studying second order shape derivatives).
\begin{thm}\label{DUE}
Let $\sg_-,\sg_+>0$ and $R\in(0,1)$. If $\sg_->\sg_+$ then $B_R$ is a local maximizer for the functional $E$ under the fixed volume constraint.

On the other hand, if $\sg_-<\sg_+$ then $B_R$ is a saddle shape for the functional $E$ under the fixed volume constraint. 
\end{thm}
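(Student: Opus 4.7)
The plan is to push the analysis one order further than Theorem \ref{UNO}: since $B_R$ is already a critical point of $E$ under the volume constraint, everything is encoded in the second shape derivative $E''(B_R)$. I would first derive a general boundary formula for $E''(B_R)[\h]$ along a smooth perturbation $\h$ satisfying $\int_{\partial B_R}h_n = 0$, with $h_n = \h\cdot\n$. By the Hadamard structure theorem, together with the vanishing of $E'(B_R)$, this quadratic form depends only on $h_n$, not on the tangential components of $\h$ nor on the second-order term of the flow. The formula will involve $h_n$, the one-sided normal derivatives $\partial_n u_\pm$, the mean curvature $H=(N-1)/R$, and the shape derivative $u'$ of the state function.

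Exploiting the explicit radial formula \eqref{uexpl}, the one-sided normal derivatives $\partial_n u_\pm$ are constant on $\partial B_R$, which simplifies every boundary quantity. The shape derivative $u'$ solves an auxiliary transmission problem, harmonic in each phase, with jump conditions across $\partial B_R$ whose sources depend linearly on $h_n$ and on the (known and explicit) jumps of $u$ and its derivatives. Thanks to the radial symmetry, this problem can be solved mode by mode by expanding in spherical harmonics: writing $h_n = \sum_{k\ge 1}\sum_i a_{k,i}Y_{k,i}$ on $\partial B_R$ (the mode $k=0$ vanishes by the volume constraint), one sees that $u'$ decomposes into pieces of the form $r^k Y_{k,i}$ inside $B_R$ and linear combinations of $r^k Y_{k,i}$ and $r^{2-N-k}Y_{k,i}$ outside, with coefficients uniquely determined by the transmission condition on $\partial B_R$ and the Dirichlet condition on $\partial\Omega$. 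This is the same strategy successfully employed in the eigenvalue setting by Conca--Mahadevan and Dambrine--Kateb.

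Inserting the harmonic expansion of $u'$ back into the second-derivative formula and invoking the orthogonality of spherical harmonics on $\partial B_R$ yields a diagonal representation
\begin{equation*}
E''(B_R)[\h] \;=\; \sum_{k\ge 1}\lambda_k\sum_{i} a_{k,i}^2,
\end{equation*}
where each coefficient $\lambda_k=\lambda_k(\sg_-,\sg_+,R,N)$ is a fully explicit rational function of the parameters. Theorem \ref{DUE} then reduces to a sign analysis of the $\lambda_k$: if $\sg_->\sg_+$ one expects $\lambda_k<0$ for every $k\ge 1$, yielding strict negative definiteness on the admissible cone and hence a local maximum of $E$; if $\sg_-<\sg_+$ one should exhibit indices $k$ for which $\lambda_k$ has opposite signs, producing the saddle configuration claimed in the statement.

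The main obstacle, I expect, is twofold. First, the careful second-order bookkeeping in the two-phase setting: the jump $\jump{u'}$ across $\partial B_R$, the mean curvature contribution, and the second-order correction of the flow must be tracked coherently, since the standard formulas in the literature are written for one-phase Dirichlet or Neumann problems and need to be adapted. Second, once the explicit expression of $\lambda_k$ as a rational function of $k$, $N$, $R$ and $\sg_\pm$ is available, proving that all $\lambda_k<0$ in one regime and that $\lambda_k$ changes sign in the other amounts to elementary but nontrivial inequalities, together with the identification of the critical mode $k^\ast$ at which the symmetry breaking occurs. A final technical point is turning the definiteness of the quadratic form $E''(B_R)$ into genuine local optimality of $B_R$, which usually requires a coercivity/continuity estimate on the admissible perturbations in a suitable Sobolev norm.
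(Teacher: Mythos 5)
Your proposal follows essentially the same route as the paper: reduce to the quadratic form $l_2^E(B_R)(h_n,h_n)$ corrected by the curvature term coming from the second-order volume constraint, solve the transmission problem for $u'$ mode by mode in spherical harmonics, and diagonalize to an explicit coefficient $Q(k)$ depending on $k$, $N$, $R$, $\sigma_\pm$. The sign analysis you leave as "elementary but nontrivial inequalities" is settled in the paper by proving that $k\mapsto Q(k)$ is monotone decreasing for $k\ge 1$, that $Q(1)$ has the sign of $1-\sigma_-/\sigma_+$, and that $Q(k)\to-\infty$, which gives negativity of every mode when $\sigma_->\sigma_+$ and a sign change (hence the saddle) when $\sigma_-<\sigma_+$.
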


In section 2 we will give the precise definition of shape derivatives and introduce the famous Hadamard forumlas, a precious tool for computing them. In the end of section 2 a proof of Theorem \ref{UNO} will emerge as a natural consequence of our calculations. 
Section 3 will be devoted to the computation of the second order shape derivative of the functional $E$ in the case $\omega=\br$.
In Section 4 we will finally calculate the sign of the second order shape derivative of $E$ by means of the spherical harmonics. The last section contains an analysis of the different behaviour that arises when volume preserving transformations are replaced by surface area preserving ones.  

\section[Computation of the first order shape derivative:\\ Proof of Theorem \ref{UNO} ]{Computation of the first order shape derivative:\\ \large{Proof of Theorem \ref{UNO}}}

We consider the following class of perturbations with support compactly contained in $\Omega$:
$$
\mathcal{A}:=\Big\{ \Phi \in \C^\infty\big([0,1)\times \rn,\rn\big)  \;\Big|\;  \Phi(0,\cdot)= {\rm Id},\; \exists K\subset\subset \Omega \text{ s.t.}\; 
\Phi(t,x)=x \; \forall t\in [0,1) ,\; \forall x\in \rn\setminus K        \Big\}.
$$

For $\Phi \in \mathcal{A}$ we will frequently write $\Phi(t)$ to denote $\Phi(t,\cdot)$ and, for all domain $D$ in $\rn$, we will denote by $\Phi(t)(D)$ the set of all $\Phi(t,x)$ for $x\in D$. We will also use the notation $D_t:=\Phi(t)(D)$ when it does not create confusion.    
In the sequel the following notation for the first order approximation (in the ``time'' variable) of $\Phi$ will be used.
\begin{equation}\label{whosh}
\Phi(t)={\rm Id} + t \h +o(t) \quad \text{as }t\to0,
\end{equation}
where $\h$ is a smooth vector field. In particular we will write $h_n:=\h\cdot \n$ (the normal component of $\h$) and $\h_\tau : = \h- \hn \n$ on the interface. 
We are ready to introduce the definition of shape derivative of a shape functional $J$ with respect to a deformation field $\Phi$ in $\mathcal{A}$ as the following derivative along the path associated to $\Phi$.
$$
\restr{\frac{d}{dt} J(D_t)}{t=0} = \lim_{t\to 0} \frac{J(D_t)-J(D)}{t}.
$$

This subject is very deep. Many different formulations of shape derivatives associated to various  kinds of deformation fields have been proposed over the years. We refer to \cite{SG} for a detailed analysis on the equivalence between the various methods. For the study of second (or even higher) order shape derivatives and their computation we refer to \cite{SG, structure, simon, new}.   

The structure theorem for first and second order shape derivatives (cf. \cite{henrot}, Theorem 5.9.2, page 220 and the subsequent corollaries) yields the following expansion. 
For every shape functional $J$, domain $D$ and pertubation field $\Phi$ in $\mathcal{A}$, under suitable smoothness assumptions the following holds.

\begin{equation}\label{expansion}
J(D_t)=J(D)+t \, l_1^J (D)(h_n)+\frac{t^2}{2}\left(  l_2^J(D)(\hn,\hn)+l_1^J(D)(Z)  	\right) + o(t^2) \quad \text{ as }t\to 0,
\end{equation}
for some linear $l_1^J(D): \C^\infty (\partial D)\to \R$ and bilinear form $l_2^J(D): \C^\infty (\partial D)\times \C^\infty (\partial D)\to\R$ to be determined eventually. 
Moreover for the ease of notation we have set 
$$
Z:=\left( V'+D\h \h   \right)\cdot \n+ ((D_\tau \n) \htau)\cdot \htau-2\gr_\tau \hn\cdot\htau,
$$
where $V(t,\Phi(t)):=\partial_t \Phi(t)$ and $V':=\partial_t V(t,\cdot)$.

According to the expansion \eqref{expansion}, the first order shape derivative of a shape functional depends only on its first order apporximation by means of its normal components.
On the other hand the second order derivative contains an ``acceleration'' term $l_1^J(D)(Z)$.  
It is woth noticing that, (see Corollary 5.9.4, page 221 of \cite{henrot}) $Z$ vanishes in the special case when $\Phi=\id+t\h$ with $\h_\tau=\0$ on $\partial D$ (this will be a key observation to compute the bilinear form $l_2^J$).

We will state the following lemma, which will aid us in the computations of the linear and bilinear forms $l_1^J(D)$ and $l_2^J(D)$ for various shape functionals (cf.  \cite{henrot}, formula (5.17), page 176 and formulas (5.110) and (5.111), page 227).
\begin{lem}[Hadamrd's Formulas]\label{nonloso}
Take $\Phi\in \A$ and let $f=f(t,x)\in \C^2([0,T), L^1(\rn))\cap \C^1([0,T), W^{1,1}(\rn))$. For every smooth domain $D$ in $\rn$ define
$
J(D_t):=\int_{D_t} f(t)
$
(where we omit the space variable for the sake of readability). Then the following identities hold:
\begin{equation}\label{deri1}   
l_1^J(D)(\hn)= \int_D \restr{\partial_t f}{t=0} + \int_{\partial D} f(0) h_n,
\end{equation}
\begin{equation}\label{deri2}
l_2^J(D)(\hn,\hn)=\int_D \restr{\partial_{tt}^2 f}{t=0}+\int_{\partial D}2\restr{\partial_t f}{t=0}h_n+\big(H f(0)+\partial_n f(0) \big)h_n^2.
\end{equation}
\end{lem}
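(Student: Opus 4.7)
The plan is to pull the moving integral back onto the fixed reference domain via the diffeomorphism $\Phi(t,\cdot)$: write
\[
J(D_t)=\int_D f(t,\Phi(t,x))\,J_\Phi(t,x)\,dx,\qquad J_\Phi(t,x):=\det D\Phi(t,x),
\]
and then differentiate repeatedly under the integral sign. The regularity hypotheses on $f$ and the smoothness of $\Phi\in\A$ justify all the interchanges, and $J_\Phi>0$ for small $t$. At $t=0$ one has $\Phi=\id$, $J_\Phi=1$, $\partial_t\Phi=\h$, and by Jacobi's formula $\partial_tJ_\Phi|_{t=0}=\operatorname{tr}(D\h)=\dv\h$. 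The chain rule then gives
\[
\restr{\frac{d}{dt}J(D_t)}{t=0}=\int_D\restr{\partial_tf}{t=0}+\int_D\bigl[\gr f(0)\cdot\h+f(0)\dv\h\bigr],
\]
and since the second bracket equals $\dv(f(0)\h)$, the divergence theorem turns it into $\int_{\partial D}f(0)\hn$, yielding \eqref{deri1}.

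For the second derivative I exploit the structure theorem: expansion \eqref{expansion} together with the fact that $l_1^J(D)$ depends only on the normal trace of the velocity shows that, to read off $l_2^J(D)(\hn,\hn)$, it suffices to pick a perturbation with the prescribed normal trace $\hn$ on $\partial D$ for which $Z$ vanishes identically — then $J''(0)=l_2^J(D)(\hn,\hn)$. I therefore choose $\Phi(t,\cdot):=\id+t\h$ where $\h=\hn\n$ is the purely normal extension of $\hn$ in a tubular neighbourhood of $\partial D$ (obtained via the signed-distance function), cut off smoothly inside. Then $\partial_{tt}\Phi=0$ and $\htau\equiv 0$ on $\partial D$; differentiating the defining identity $V(t,\Phi(t,x))=\h(x)$ at $t=0$ also yields $V'+(D\h)\h=0$. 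Each of the three summands in the definition of $Z$ therefore drops, and every further calculation is carried out for this special field.

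Setting $g(t,x):=f(t,\Phi(t,x))J_\Phi(t,x)$, I expand $\int_D\partial_{tt}g|_{t=0}$ and sort by how many time derivatives fall on $f$. The $\partial_{tt}f$-group immediately contributes $\int_D\restr{\partial_{tt}f}{t=0}$. The $\partial_tf$-group combines (using $\partial_tJ_\Phi|_{t=0}=\dv\h$) into $2\int_D\dv(\restr{\partial_tf}{t=0}\h)$, which the divergence theorem converts into $2\int_{\partial D}\restr{\partial_tf}{t=0}\hn$. The delicate $f$-dependent remainder is
\[
\int_D\Bigl[D^2f(\h,\h)+2(\gr f\cdot\h)\dv\h+f\bigl((\dv\h)^2-\operatorname{tr}((D\h)^2)\bigr)\Bigr],
\]
where I used Jacobi's formula a second time to compute $\partial_{tt}J_\Phi|_{t=0}=(\dv\h)^2-\operatorname{tr}((D\h)^2)$. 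The identity
\[
D^2f(\h,\h)=\dv\bigl((\gr f\cdot\h)\h\bigr)-\gr f\cdot(D\h)\h-(\gr f\cdot\h)\dv\h
\]
and the Piola-type relation $(\dv\h)^2-\operatorname{tr}((D\h)^2)=\dv\bigl((\dv\h)\h-(D\h)\h\bigr)$ convert all interior data into divergences; after integration by parts the interior cross-terms cancel pairwise, leaving only the boundary integrals $\int_{\partial D}(\gr f\cdot\h)(\h\cdot\n)$ and $\int_{\partial D}f\bigl(\hn\dv\h-\n\cdot(D\h)\h\bigr)$. With $\n$ extended as the gradient of the signed distance function (so $D\n\cdot\n=0$) and $\h=\hn\n$ on $\partial D$, a direct computation gives $\dv\h=\dn\hn+\hn H$ and $\n\cdot(D\h)\h=\hn\,\dn\hn$, whence $\hn\dv\h-\n\cdot(D\h)\h=H\hn^2$ and $(\gr f\cdot\h)(\h\cdot\n)=\hn^2\dn f$. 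Reassembling the three groups reproduces \eqref{deri2}.

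The principal obstacle is precisely this last tangential-calculus reduction: spotting the two correct divergence identities, choosing the normal extension via the signed distance so that $D\n\cdot\n=0$ holds on $\partial D$, and then patiently checking that the interior cross-terms cancel so that the mean-curvature term $Hf(0)\hn^2$ emerges cleanly from $\dv\h$ on $\partial D$. Everything else amounts to the chain rule, Jacobi's formula, and the divergence theorem.
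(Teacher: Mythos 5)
Your proof is correct, but there is nothing in the paper to compare it against: Lemma \ref{nonloso} is stated without proof and simply cited from Henrot--Pierre (formulas (5.17), (5.110), (5.111) of \cite{henrot}). Your self-contained derivation is the standard one and it checks out. The pullback $J(D_t)=\int_D f(t,\Phi(t,x))\det D\Phi(t,x)\,dx$ together with Jacobi's formula gives \eqref{deri1} immediately. For \eqref{deri2}, your reduction to the field $\h=\hn\n$ built from the signed distance is legitimate (it is exactly the device the paper itself uses in the proof of Theorem \ref{lduu}): for $\Phi=\id+t\h$ one has $V'+D\h\,\h=\0$ and $\htau=\0$, so all three summands of $Z$ vanish and $J''(0)=l_2^J(D)(\hn,\hn)$ by the structure theorem. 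I verified your two divergence identities, the second-order expansion $\det(I+tD\h)=1+t\,\dv\h+\tfrac{t^2}{2}\big((\dv\h)^2-{\rm tr}((D\h)^2)\big)+o(t^2)$, the pairwise cancellation of the interior cross-terms, and the boundary reductions $(\gr f\cdot\h)(\h\cdot\n)=\hn^2\,\dn f$ and $\hn\,\dv\h-\n\cdot(D\h)\h=\hn\big(\dv\h-\n\cdot(D\h)\n\big)=\hn\,\dv_\tau(\hn\n)=H\hn^2$; all are correct. The only caveat, which you inherit from the lemma as stated rather than introduce yourself, is regularity: the hypothesis $f\in\C^2([0,T),L^1)\cap\C^1([0,T),W^{1,1})$ does not by itself give meaning to $D^2f(\h,\h)$ or to the trace $\dn f(0)$ on $\partial D$, so the computation should formally be carried out for smooth $f$ (which is all the paper ever needs, since it applies the lemma to piecewise-smooth data after splitting the integrals).
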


Since we are going to compute second order shape derivatives of a shape functional subject to a volume constraint, we will need to restric our attention to the class of perturbations in $\A$ that fix the volume of $\omega$:
$$
\mathcal{B}(\omega):= \big\{  \Phi\in \mathcal{A}  \;\big |\;  \vol\big(\Phi(t)(\omega)\big)=\vol(\omega)=m  \text{ for all } t\in [0,1)  \big\}.
$$
We will simply write $\B$ in place of $\B(\br)$.
Employing the use of Lemma \ref{nonloso} for the volume functional $\vol$ and of the expansion \eqref{expansion}, for all $\Phi\in\A$ we get
\begin{equation}\label{volex}
\vol(\omega_t)=\vol(\omega)+ t \int_{\partial\omega} \hn + \frac{t^2}{2} \left( \int_{\partial \omega} H h_n^2 + \int_{\partial\omega} Z     \right) + o(t^2) \text{ as }t\to0.
\end{equation}
This yields the following two conditions: 
\begin{align}
&\int_{\partial \omega} \hn =0,  \quad\quad\quad&\text{($1^{\rm st}$ order volume preserving)} \label{1st}\\
&\int_{\partial \omega} H h_n^2+ \int_{\partial \omega}Z=0. &\text{($2^{\rm nd}$ order volume preserving)}\label{2nd}
\end{align} 
\begin{rmk}
For every admissible perturbation field $\Phi=\id+t\h$ in $\A$, with $\h$ satisfying \eqref{1st}, we can find some perturbation field $\widehat{\Phi}\in\B$ such that $\widehat{\Phi}=\id + t \h + o(t)$ as $t\to 0$.
For example, the following construction works just fine:
$$
\widehat{\Phi}(t,x)=\frac{\Phi(t,x)}{\eta(x)\left( \frac{\vol(\Phi(t)(\omega))}{\vol(\omega)} \right)^{1/N}+(1-\eta(x))},
$$
where $\eta$ is a suitable smooth cutoff function compactly supported in $\Omega$ that attains the value $1$ on a neighbourhood of $\omega$. 
\end{rmk}

We will now introduce the concepts of ``shape" and ``material" derivative of a path of real valued functions defined on $\Omega$.
Fix an admissible perturbation field $\Phi\in\A$ and let $u=u(t,x)$ be defined on $[0,1)\times\Omega$ for some positive $T$.
Computing the partial derivative with respect to $t$ at a fixed point $x\in\Omega$ is usually called {\em shape derivative} of $u$; we will write:
$$
u'(t_0,x):= \frac{\partial u}{\partial t} (t_0,x), \;\text{ for }x\in\Omega, t_0\in [0,1). 
$$ 
On the other hand differentiating along the trajectories gives rise to the {\em material derivative}:
$$
\dot{u}(t_0,x):= \frac{\partial v}{\partial t}(t_0,x), \;x\in\Omega, t_0\in [0,1);
$$ 
where $v(t,x):=u(t, \Phi(t,x))$.
From now on for the sake of brevity we will omit the dependency on the ``time" variable unless strictly necessary and write $u(x)$, $u'(x)$ and $\dot{u}(x)$ for$u(0,x)$, $u'(0,x)$ and $\dot{u}(0,x)$. 
The following relationship between shape and material derivatives hold true:
\begin{align}
\label{u'a} u'&=\dot{u}-\gr u \cdot \h.
\end{align}
We are interested in the case where $u(t,\cdot):=u_{(B_R)_t}$ i.e. it is the solution to problem \eqref{pb} when $\omega=\Phi(t)(B_R)$.
In this case, since by symmetry we have $\gr u = (\dn u) \n$, the formula above admits the following simpler form on the interface $\partial B_R$:
\begin{equation}\label{u'}
u'=\dot{u}-(\dn u) \hn.
\end{equation}

It is natural to ask whether the shape derivatives of the functional $E$ are well defined. Actually, by a standard argument using the implicit function theorem for Banach spaces (we refer to \cite{conca, sensitivity} for the details) it can be proven that the application mapping every smooth vector field ${\bf h}$ compactly supported in $\Omega$ to $E\left((\id+\h)(\omega)\right)$ is of class $\mathcal{C}^\infty$ in a neighbourhood of ${\bf h }={\bf 0}$. This implies the shape differentiability of the functional $E$ for any admissible deformation field $\Phi\in \A$. As a byproduct we obtain the smoothness of the material derivative $\dot{u}$.

As already remarked in \cite{sensitivity} (Remark 2.1), in contrast to material derivatives, the shape derivative $u'$ of the solution to our problem has a jump through the interface. This is due to the presence of the gradient term in formula \eqref{u'a} (recall that the transmission condition provides only the continuity of the flux). On the other hand we will still be using shape derivatives because they are easier to handle in computations (and writing Hadamard formulas using them is simpler). 

\begin{prop}
For any given admissible $\Phi\in\A$, the corresponding $u'$ can be characterized as the (unique) solution to the following problem in the class of functions whose restriction to both $\br$ and $\Omega\setminus \overline{\br}$ is smooth:
\begin{equation}\label{u'pb}
\begin{cases}
\lap u' =0 \quad&\text{in } \br \cup (\Omega\setminus \overline{\br}),\\
\jump{\sg \dn u'}=0 \quad&\text{on }\bbr,\\
\jump{u'}=-\jump{\dn u}\hn\quad&\text{on } \bbr,\\
u'= 0 \quad&\text{on }\partial \Omega.
\end{cases}
\end{equation}
\end{prop}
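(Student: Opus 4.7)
The plan is to derive each of the four conditions in \eqref{u'pb} directly from the corresponding feature of \eqref{pb2}, exploiting the fact that $\Phi \in \A$ has support compactly contained in $\Omega$.

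First I would handle the easy items. For the harmonicity, fix $x$ in the open set $B_R$: for $t$ small enough, $x$ remains in $\omega_t = \Phi(t)(B_R)$, so $-\sigma_- \Delta u(t,x) = 1$; differentiating at $t=0$ gives $\Delta u'(x) = 0$, and the exterior phase is analogous. For the Dirichlet condition, since $\Phi(t,\cdot)$ equals the identity in a neighbourhood of $\partial\Omega$ and $u(t,\cdot)=0$ there for every $t$, differentiating in $t$ yields $u'=0$ on $\partial\Omega$.

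Next, for the jump $[u']=-[\dn u]\hn$, the key observation is that the material derivative $\dot u$ is continuous across $\partial B_R$: for $x\in\partial B_R$ the trajectory $\Phi(t,x)$ stays on $\partial\omega_t$, and the continuity of $u(t,\cdot)$ across $\partial\omega_t$ forces $v(t,x):=u(t,\Phi(t,x))$ to be continuous in $x$ across $\partial B_R$, uniformly in $t$; differentiating in $t$ then gives $[\dot u]=0$. Combining this with \eqref{u'a} and the fact that $u\in C^0(\Omega)$ makes $[\nabla u]$ purely normal (i.e.\ $[\nabla u]=[\dn u]\n$), one obtains $[u']=[\dot u]-[\nabla u\cdot\h]=-[\dn u]\hn$.

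The delicate point is the transmission condition $[\sg\dn u']=0$. Here I would differentiate the moving-domain weak formulation
\[
\sg_-\int_{\omega_t}\nabla u(t)\cdot\nabla\varphi+\sg_+\int_{\Omega\setminus\overline{\omega_t}}\nabla u(t)\cdot\nabla\varphi=\int_\Omega\varphi
\]
with respect to $t$ at $t=0$, for a fixed test function $\varphi\in\C^\infty_c(\Omega)$. Applying Lemma \ref{nonloso} to each of the two phases (with opposite-signed boundary contributions, because the outward normals of $\omega_t$ and $\Omega\setminus\overline{\omega_t}$ agree up to sign on the interface) collects a single surface integral of $[\sg\nabla u\cdot\nabla\varphi]\hn$. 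Decomposing $\nabla u$ and $\nabla \varphi$ into normal and tangential parts, the transmission condition $[\sg\dn u]=0$ kills the normal contribution, while the tangential one reduces to $[\sg](\nabla_\tau u\cdot\nabla_\tau\varphi)\hn$ and \emph{vanishes identically} because the explicit form \eqref{uexpl} shows that $u$ is radial, so $\nabla_\tau u\equiv 0$ on $\partial B_R$. One is left with $\int_\Omega\sg\nabla u'\cdot\nabla\varphi=0$; integration by parts separately in $B_R$ and $\Omega\setminus\overline{B_R}$, together with the already-established harmonicity of $u'$, reduces this to $\int_{\partial B_R}[\sg\dn u']\varphi=0$ for every such $\varphi$, whence $[\sg\dn u']=0$. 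Uniqueness is then a standard energy argument: the difference of two candidates has zero jump, so it lies in $H^1_0(\Omega)$, and testing it against itself in the weak formulation forces it to vanish.

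The step I expect to be the main obstacle is precisely this transmission condition: its proof hinges simultaneously on the weak formulation machinery and on the radial symmetry of the background stress function — for a non-radial reference configuration $\nabla_\tau u$ would fail to vanish, and $u'$ would satisfy an inhomogeneous version of the fourth equation of \eqref{u'pb}.
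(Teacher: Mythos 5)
Your proof is correct, and for the one delicate point --- the transmission condition $\jump{\sg\dn u'}=0$ --- it takes a genuinely different route to the same intermediate identity \eqref{this}. The paper pulls the weak formulation \eqref{weak} back to the fixed domain via the change of variables $y=\Phi(t,x)$, uses the moving test function $\psi\circ\Phi(t)$, differentiates in $t$ to produce the material derivative $\dot u$, and then eliminates it with the algebraic identity for $-D\h-D\h^T+(\dv\h)I$, formula \eqref{u'} and the radiality of $u$. You instead keep the test function fixed and differentiate the two moving-domain integrals directly with Hadamard's formula \eqref{deri1}, phase by phase; the only surviving interface contribution is then $\isl\jump{\sg\gr u\cdot\gr\varphi}\hn$, which you correctly annihilate by splitting $\gr u\cdot\gr\varphi$ into normal and tangential parts and invoking $\jump{\sg\dn u}=0$ together with $\gr_\tau u=0$ on $\bbr$ from \eqref{uexpl} --- the same two facts the paper uses, packaged differently. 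Your version is shorter and bypasses the material derivative entirely in this step; the price is that applying Lemma \ref{nonloso} to $f(t)=\sg_\pm\gr u_t\cdot\gr\varphi$ on each phase presupposes phase-wise time-regularity of $u_t$, which is exactly what the implicit-function-theorem argument quoted in the paper provides, and it is the same splitting device the paper itself employs in the proof of Theorem \ref{thm1}, so nothing is lost. You are also right to flag that the vanishing of the tangential term is special to the radial reference configuration; the paper's computation relies on $\gr u=(\dn u)\n$ on $\bbr$ at the corresponding point. Your treatment of the harmonicity, the Dirichlet condition and uniqueness coincides with the paper's, and your derivation of $\jump{u'}=-\jump{\dn u}\hn$ from the continuity of $\dot u$ and \eqref{u'a} spells out a step the paper only asserts.
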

\begin{proof}
Let us now prove that $u'$ solves \eqref{u'pb}.
First we take the shape derivative of both sides of the first equation in \eqref{pb2} at points away from the interface:
\begin{equation}\label{laplu'}
\Delta u' = 0 \text{ in }\omega \cup (\Omega \setminus \overline{\br}).
\end{equation}
In order to prove that $\jump{\sg\dn u'}$ vanishes on $\partial B_R$ we will proceed as follows.
We performing the change of variables $y:=\Phi(t,x)$ in \eqref{weak} and set $\varphi(x)=:\psi\left( \Phi(t,x) \right)$. Taking the derivative with respect to $t$, bearing in mind the first order approximation of $\Phi$ given by \eqref{whosh} yields the following.
\begin{align*}
\iO \sigma \left( - D\h \nabla u + \nabla \dot{u} \right)\cdot \nabla \psi - \iO \sg\gr u \cdot D\h \gr \psi + \iO \sg \gr u \cdot \gr \psi \dv{ \h} = \iO \psi \dv\h. 
\end{align*}

Rearranging the terms yields:
$$
\iO \sg \gr\dot{u}\cdot \gr\psi +\iO \sg \underbrace{\left( -D \h - D\h ^T + (\dv \h) I \right)\gr u \cdot \gr\psi}_{\mathlarger{\mathlarger{\mathlarger{\circledast}}}} = 
-\iO \h \cdot \gr\psi.
$$

Let {\bf x} and {\bf y} be two sufficiently smooth vector fields in $\rn$ such that $D {\bf x}= \left( D {\bf x}\right) ^T$ and $D {\bf y}= \left( D {\bf y}\right) ^T$. It is easy to check that the following identity holds:
$$
\left( -D \h - D\h^T + (\dv \h) I \right) {\bf x}\cdot {\bf y} = \dv \left(({\bf x}\cdot {\bf y}) \h\right) - \gr(\h\cdot {\bf x})\cdot {\bf y}- \gr(\h\cdot {\bf y})\cdot {\bf x}.
$$

We can apply this identity with ${\bf x}=\gr u$ and ${\bf y}= \gr \psi$ to rewrite $\mathlarger{\mathlarger{\mathlarger{\circledast}}}$ as follows:
$$
\left( -D \h - D\h ^T + (\dv \h) I \right)\gr u \cdot \gr\psi=\underbrace{\dv(\gr u \cdot \gr\psi \h)}_{\circled{1}} -\underbrace{\gr(\h\cdot \gr u)\cdot \gr \psi}_{\circled{2}} -\underbrace{\gr(\h\cdot \gr\psi)\cdot \gr u}_{\circled{3}}. 
$$
Thus 
\begin{align*}
\iO \sg \gr\dot{u}\cdot \gr \psi - \isl \jump{\sg \gr u \cdot \gr \psi}\hn- \iO \sg \gr(\h \cdot \gr u)\cdot \gr \psi+\isl \sg_- \dn u_- \jump{\dn \psi}\hn =0,
\end{align*}
where we have split the integrals and integrated by parts to handle the terms coming from $\circled{1}$ and $\circled{3}$. 

Now, merging together the integrals on $\Omega$ in the left hand side by \eqref{u'} and exploiting the fact that $\gr u = (\dn u) \n$ on $\bbr$, the above simplifies to 
\begin{equation}\label{this}
\iO \sg \gr u' \cdot \gr \psi = 0.
\end{equation}

Splitting the domain of integration and integrating by parts, we obtain
\begin{equation}\nonumber
\begin{aligned}
0=-\int_\br \sg_- \Delta u_-' \psi + \int_{\bbr} \sg_- \dn u_-' \psi -\int_{\Omega\setminus \overline{\br}} \sg_+ \Delta u_+' \psi -\int_{\bbr} \sg_+\dn u_+' \psi-\int_{\partial \Omega} \sg_+ \dn u_+' \psi \\
= \isl \jump{\sg \dn u'} \psi,
\end{aligned}
\end{equation}
where in the last equality we have used \eqref{laplu'} and the fact that $\psi$ vanishes on $\partial\Omega$.
By the arbitrariness of $\psi\in H_0^1(\Omega)$ we can conclude that $\jump{\sg \dn u'}=0$ on $\partial \br$.  
The remaining conditions of problem \eqref{u'pb} are a consequence of \eqref{u'a}.

To prove uniqueness for this problem in the class of functions whose restriction to both $\br$ and $\Omega\setminus \overline{\br}$ is smooth, just consider the difference between two solutions of such problem and call it $w$. Then $w$ solves
$$
\begin{cases}
\lap w =0 \quad&\text{in } \br \cup (\Omega\setminus \overline{\br}),\\
\jump{\sg \dn w}=0 \quad&\text{on }\bbr,\\
\jump{w}=0\quad&\text{on } \bbr,\\
w=0 \quad&\text{on }\partial \Omega;
\end{cases}
$$ 
in other words, $w$ solves 
$$
\begin{cases}
-\dv (\sg \gr w)=0 \quad&\text{in }\Omega,\\
w=0 \quad&\text{on }\partial\Omega.
\end{cases}
$$
Since the only solution to the problem above is the constant function $0$, uniqueness for Problem \eqref{u'pb} is proven.
\end{proof}

We emphasize that formulas \eqref{deri1} and \eqref{deri2} are valid only for $f$ belonging at least to the class $\C^2([0,T), L^1(\rn))\cap \C^1([0,T), W^{1,1}(\rn))$. 
We would like to apply them to $f(t)=u_t$ and $f(t)=\sigma_t |\nabla u_t|^2$, where $\sg_t$ and $u_t$ are the distribution of conductivities and the solution of problem \eqref{pb} respectively corresponding to the case $\omega=(\br)_t$. On the other hand, $u_t$ is not regular enough in the entire domain $\Omega$, despite being fairly smooth in both $\omega_t$ and $\Omega_t\setminus \overbar{\omega_t}$: therefore we need to split the integrals in order to apply \eqref{deri1} and \eqref{deri2} (this will give rise to interface integral terms by integration by parts).

\begin{thm}\label{thm1}
For all $\Phi\in\A$ we have
$$l_1^E(B_R)(\hn)=-\isl \jump{\sg |\gr u|^2} \hn.$$
In particular, for all $\Phi$ satisfying the first order volume preserving condition \eqref{1st} we get $l_1^E(B_R)(\hn)=0$.
\end{thm}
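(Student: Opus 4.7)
The plan is to start from the decomposition
$$E(\omega_t) = \sigma_-\int_{\omega_t}|\nabla u_t|^2 + \sigma_+\int_{\Omega\setminus\overbar{\omega_t}}|\nabla u_t|^2$$
and apply Hadamard's formula \eqref{deri1} to each of the two pieces. The shape derivative of the integrand at a fixed interior point is $2\sigma_{\pm}\nabla u\cdot\nabla u'$. For the inner integral the moving boundary $\partial\omega_t$ has outward normal $\n$, producing the boundary contribution $\sigma_-\int_{\partial B_R}|\nabla u_-|^2\,h_n$. For the outer integral one has to be careful with two things: (i) the outward normal to $\Omega\setminus\overbar{\omega_t}$ along $\partial\omega_t$ is $-\n$, which flips the sign of the interface contribution; and (ii) the external boundary $\partial\Omega$ does not move, because $\Phi\in\A$ has support compactly contained in $\Omega$, so it contributes nothing. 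Summing the two pieces would give
$$l_1^E(B_R)(h_n) = 2\int_\Omega\sigma\,\nabla u\cdot\nabla u'\;-\;\int_{\partial B_R}\jump{\sigma|\nabla u|^2}\,h_n.$$

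The decisive step is to kill the volume integral. One cannot simply insert $u'$ into the weak formulation \eqref{weak} for $u$, since by \eqref{u'pb} the shape derivative $u'$ jumps across $\partial B_R$ and hence does not belong to $H^1_0(\Omega)$. Instead, I would invoke the already-established identity \eqref{this}, which asserts $\int_\Omega\sigma\,\nabla u'\cdot\nabla\psi=0$ for every $\psi\in H^1_0(\Omega)$; choosing the admissible test function $\psi=u$ yields $\int_\Omega\sigma\,\nabla u\cdot\nabla u'=0$, and therefore
$$l_1^E(B_R)(h_n) = -\int_{\partial B_R}\jump{\sigma|\nabla u|^2}\,h_n,$$
which is the first assertion.

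For the ``in particular'' statement, the explicit formula \eqref{uexpl} shows that $u$ is radially symmetric in each of the two phases, so $|\nabla u_{\pm}|^2$, and a fortiori $\jump{\sigma|\nabla u|^2}$, is \emph{constant} along the sphere $\partial B_R$. Factoring this constant out of the integral, the first-order volume-preserving condition \eqref{1st} forces $\int_{\partial B_R}h_n=0$, whence $l_1^E(B_R)(h_n)=0$. The only non-routine point in the whole argument is recognising that $u'\notin H^1_0(\Omega)$ so a direct test in \eqref{weak} is forbidden, and replacing it by the dual identity \eqref{this} tested against $u$; everything else is careful bookkeeping of normal orientations in Hadamard's formula.
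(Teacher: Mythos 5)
Your proposal is correct and follows essentially the same route as the paper: apply Hadamard's formula \eqref{deri1} to $E(\omega_t)=\int_\Omega\sigma_t|\nabla u_t|^2$ split across the two phases (with the sign flip of the normal on the inner boundary of the outer phase and no contribution from the fixed $\partial\Omega$), arriving at \eqref{sononi}, and then kill the volume term by testing \eqref{this} with $\psi=u$. The paper additionally records the alternative computation via $E(\omega_t)=\int_\Omega u_t$, but that is not needed for the conclusion, and your observation that one must use \eqref{this} rather than test \eqref{weak} against the discontinuous $u'$ is exactly the point.
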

\begin{proof}
We apply formula \eqref{deri1} to 
$
E(\omega_t)=\int_\Omega u_t=\int_{\omega_t} u_t+\int_{\compt} u_t
$
to get
$$
l_1^E(B_R)(\hn)= \int_\br u_-'+\int_{\bbr}u_- h_n + \int_{\Omega\setminus \overline{B_R}} u_+'
-\int_{\bbr} u_+ h_n.
$$
Using the jump notation we rewrite the previous expression as follows
\begin{equation}\label{sonoichi}
l_1^E(B_R)(\hn)= \int_\Omega u' - \int_{\bbr} [u h_n]=\int_\Omega u';
\end{equation}
notice that the surface integral in \eqref{sonoichi} vanishes as both $u$ and $h_n$ are continuous through the interface.
\\
Next we apply \eqref{deri1} to $E(\omega_t)=\int_\Omega \sigma_t |\nabla u_t|^2$.
\begin{align*}
l_1^E(B_R)(\hn)= & 2\int_{\br} \sigma_- \nabla u_-\cdot \nabla u_-' + \int_{\bbr} \sigma_- |\nabla u_-|^2 h_n+ \\
& 2\int_{\compl} \sigma_+ \nabla u_+\cdot \nabla u_+' +\int_{\partial\Omega} \sigma_+ |\nabla u_+|^2 h_n - \int_{\bbr} \sigma_+ |\nabla u_+|^2 h_n.
\end{align*}
Thus we get the following:
\begin{equation}\label{sononi}
l_1^E(B_R)(\hn)= 2\int_\Omega \sigma \nabla u\cdot \nabla u' -\isl \jump{\sigma |\nabla u|^2}h_n.
\end{equation}
Comparing \eqref{this} (choose $\psi=u$) with 
\eqref{sononi} gives
\begin{equation}\label{that}
l_1^E(B_R)(\hn)= -\isl \jump{\sg |\gr u|^2} \hn.
\end{equation}
By symmetry, the term $ \jump{\sigma |\nabla u|^2}$ is constant on $\bbr$ and can be moved outside the integral sign. Therefore we have
\[
l_1^E(B_R)(\hn)= 0 \text{ for all $\Phi$ satisfying \eqref{1st}}.
\]
This holds in particular for all $\Phi\in\B$. 
\end{proof}

\section{Computation of the second order shape derivative}

The result of the previous chapter tells us that the configuration corresponding to $\br$ is a critical shape for the functional $E$ under the fixed volume constraint. In order to obtain more precise information, we will need an explicit formula for the second order shape derivative of $E$.
The main result of this chapter consists of the computation of the bilinear form $l_2^E(B_R)(\hn,\hn)$.

\begin{thm}\label{lduu}
For all $\Phi\in\A$ we have 
$$
\ldue=-2\isl \sg_-\dn u_- \jump{\dn u'}\hn -2\isl \sg_-\dn u_- \jump{\partial_{nn}^2 u}h_n^2-\isl\sg_- \dn u_-\jump{\dn u} H h_n^2.
$$
\end{thm}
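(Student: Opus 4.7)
The plan is to evaluate $\ldue$ by applying Hadamard's formula \eqref{deri2} to the split representation
\[
E(\omega_t) = \int_{\omega_t}\sg_-|\gr u_t|^2 + \int_{\Omega\setminus\overline{\omega_t}}\sg_+|\gr u_t|^2,
\]
and then reducing the resulting volume contributions to interface integrals on $\bbr$. Since $\ldue$ depends only on $\hn$ by the structure theorem, it is enough to work with perturbations $\Phi=\id+t\h$ satisfying $\htau=0$ on $\bbr$, for which $Z=0$ and Hadamard's formula recovers $\ldue$ directly.

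Applying \eqref{deri2} to the two pieces with $f_\pm=\sg_\pm|\gr u^\pm|^2$ and summing --- bearing in mind that for the complement piece the outward normal on $\bbr$ is $-\n$ (flipping the signs of $\hn$, $\dn$ and $H$), and that the $\partial\Omega$-contribution vanishes because $\h$ is compactly supported in $\Omega$ --- I would use the continuity of $u$, the transmission conditions $[\sg\dn u]=[\sg\dn u']=0$ (the latter from \eqref{u'pb}), the radial identity $\gr u=(\dn u)\n$ on $\bbr$, and the jump formulas $\jump{\sg|\gr u|^2}=\sg_-\dn u_-\jump{\dn u}$ and $\jump{\dn(\sg|\gr u|^2)}=2\sg_-\dn u_-\jump{\partial_{nn}^2 u}$ to arrive at
\[
\ldue = 2\int_\Omega \sg\bigl(|\gr u'|^2+\gr u\cdot\gr u''\bigr) - 4\isl \sg_-\dn u_-\jump{\dn u'}\hn - 2\isl \sg_-\dn u_-\jump{\partial_{nn}^2 u}\hn^2 - \isl H\sg_-\dn u_-\jump{\dn u}\hn^2.
\]
The theorem would then follow from the identity $\int_\Omega \sg(|\gr u'|^2+\gr u\cdot \gr u'') = \isl \sg_-\dn u_-\jump{\dn u'}\hn$.

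For the first summand I would integrate by parts separately on $\br$ and $\Omega\setminus\overline{\br}$, using $\Delta u'_\pm=0$ and $u'_+\big|_{\partial\Omega}=0$ from \eqref{u'pb}, to obtain $\int_\Omega \sg|\gr u'|^2 = -\isl \sg_-\dn u'_-\jump{u'}$; substituting $\jump{u'}=-\jump{\dn u}\hn$ would convert this into $\isl \sg_-\dn u'_-\jump{\dn u}\hn$, which equals $\isl \sg_-\dn u_-\jump{\dn u'}\hn$ by the pointwise identity $\sg_-\dn u'_-\jump{\dn u}=\sg_-\dn u_-\jump{\dn u'}$ on $\bbr$ (an algebraic consequence of the two transmission conditions: expressing $\dn u_\pm$ and $\dn u'_\pm$ in terms of their common flux values on $\bbr$ makes both sides immediately equal). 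For the second summand, integration by parts using $-\sg_\pm\Delta u_\pm=1$ and $\Delta u''_\pm=0$ would give $\int_\Omega \sg\gr u\cdot\gr u'' = \int_\Omega u''-\isl\sg_-\dn u_-\jump{u''}$, while Green's second identity applied to the pair $(u,u'')$ on each subregion would yield $\int_\Omega u'' = -\isl u\jump{\sg\dn u''}+\isl \sg_-\dn u_-\jump{u''}$; combining these produces $\int_\Omega \sg\gr u\cdot\gr u'' = -\isl u\jump{\sg\dn u''}$.

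At this point radial symmetry closes the argument: $u$ is constant on $\bbr$, and both $\isl\sg_-\dn u''_-$ and $\isl\sg_+\dn u''_+$ vanish. The first vanishes by the divergence theorem on $\br$ (using $\Delta u''_-=0$), and the second equals $\int_{\partial\Omega}\sg_+\dn u''_+$ by the analogous argument on $\Omega\setminus\overline{\br}$; the latter is zero because the global balance $\int_{\partial\Omega}\sg_+\dn u_t=-\vol(\Omega)$ (obtained by integrating $-\dv(\sg_t\gr u_t)=1$ over $\Omega$) is independent of $t$, hence its second $t$-derivative vanishes. Therefore $\isl\jump{\sg\dn u''}=0$, $\int_\Omega \sg\gr u\cdot\gr u''=0$, and the target identity follows. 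The main difficulty is precisely the disposal of $\int_\Omega \sg\gr u\cdot\gr u''$: without a BVP characterization of $u''$ analogous to \eqref{u'pb}, one must combine the global conservation law with the radial-symmetry feature that $u$ is constant on $\bbr$ in order to avoid working with $u''$ explicitly.
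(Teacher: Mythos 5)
Your proposal is correct, but it takes a genuinely different route from the paper. The paper never invokes the second-order Hadamard formula for the energy itself: having already established $l_1^E(B_R)(\hn)=-\int_{\partial B_R}\jump{\sg|\gr u|^2}\hn$, it rewrites this surface integral as a difference of volume integrals of divergences over $(B_R)_t$ and $\Omega\setminus (B_R)_t$ and differentiates \emph{once more} in $t$ using only the first-order formula \eqref{deri1}; as a result $u''$ never appears. You instead apply \eqref{deri2} directly to the split energy, which is the more naive route and forces you to confront the volume term $\int_\Omega\sg\,(|\gr u'|^2+\gr u\cdot\gr u'')$. Your disposal of it is correct: the $|\gr u'|^2$ part reduces to $\isl\sg_-\dn u_-\jump{\dn u'}\hn$ exactly as you say (integration by parts, $\jump{u'}=-\jump{\dn u}\hn$, and the flux-symmetry identity $\jump{\sg\,\dn u\,\dn u'}=\sg_-\dn u_-\jump{\dn u'}=\sg_-\dn u'_-\jump{\dn u}$), and your treatment of the $u''$ term via Green's identity, the constancy of $u$ on $\partial B_R$, and the $t$-independence of the total flux $\int_{\partial\Omega}\sg_+\dn u_t=-\vol(\Omega)$ is a clean way to get $\int_\Omega\sg\,\gr u\cdot\gr u''=-\isl u\,\jump{\sg\dn u''}=0$ without ever deriving a transmission problem or jump formula for $u''$ (which is where a direct attack on $u''$ usually gets messy). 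The sign bookkeeping for the complement region (flipping $\hn$, $\dn$ and $H$) and the jump identities $\jump{\sg|\gr u|^2}=\sg_-\dn u_-\jump{\dn u}$, $\jump{\dn(\sg|\gr u|^2)}=2\sg_-\dn u_-\jump{\partial^2_{nn}u}$ are all right, and the resulting expression matches the theorem. The trade-off is clear: the paper's differentiation of the first variation is shorter and stays entirely at the level of $u'$, while your argument is self-contained at the level of \eqref{deri2} but relies on the radial symmetry of $B_R$ (constancy of $u$ on the interface) to kill the $u''$ contribution, so it would not generalize verbatim to a non-symmetric critical shape.
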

\begin{proof}
Take $\Phi=\id+t\h$ in $\A$ with $\h=\hn \n$ on $\bbr$. As remarked after \eqref{expansion}, $Z$ vanishes in this case. We get 
$$
\ldue=\frac{d^2}{dt^2}\restr{E(\Phi(t)(B_R))}{t=0}.
$$
Hence, substituting the expression of the first order shape derivative obtained in Theorem \ref{thm1} yields
\begin{equation}\nonumber
\ldue= -\restr{\frac{d}{dt}\left( \int_{(\partial\br)_t} \jump{\sg_t |\gr u_t|^2}\hn\circ \Phi^{-1}(t)   \right) }{t=0}.
\end{equation}
We unfold the jump in the surface integral above and apply the divergence theorem to obtain
\begin{equation}\label{ltuu}
\ldue=\frac{d}{dt}\restr{\left( \int_{(B_R)_t} \dv\left(  \sg_- |\gr u_t|^2 \h\circ \Phi^{-1}(t) \right)  -  \int_{\Omega \setminus (B_R)_t} \dv\left(  \sg_+ |\gr u_t|^2 \h\circ \Phi^{-1}(t) \right)  \right)}{t=0}
\end{equation}
We will treat each integral individually.
By \eqref{whosh} we have $\partial_t \restr{\left(  \Phi  \right)}{t=0}=-\h$, therefore $\partial_t\restr{\left(\h\circ \Phi^{-1} \right)}{t=0}$=-D\h\h. Now set $f(t):=\sg_- |\gr u_t|^2$.
By \eqref{deri1} we have
\begin{equation}\nonumber
\begin{aligned}
\frac{d}{dt}\restr{\left(\int_{(B_R)_t}   \dv \left( f(t) \h\circ \Phi^{-1}(t) \right)\right)}{t=0} =   \underbrace{ \int_\br \partial_t\restr{\left(  \dv \left(  f(t) \h\circ\Phi^{-1}(t)  \right)  \right)}{t=0}}_{(A)}
 + \underbrace{ \isl \dv (f(0)\h)\hn}_{(B)}.
\end{aligned}
\end{equation}
We have
\begin{equation}\nonumber
\begin{aligned}
(A)=\int_\br \dv \left( \partial_t \restr{f}{t=0} \h + f(0)\partial_t \restr{(\h\circ \Phi^{-1}(t))}{t=0}  \right) =    
  \int_\br \dv \left( \partial_t \restr{f}{t=0} \h \right) - \int_\br \dv \left( f(0)D\h\h\right) =  \\
    \int_\br \partial_t \restr{f}{t=0} \hn - \isl f(0) \n\cdot D\h\h.  
\end{aligned}
\end{equation}
On the other hand
$$
(B)= \isl \dv \left(  f(0) \h \right)\hn = \isl \left(  \gr f(0)\cdot \h + f(0)\dv \h   \right) \hn.
$$
Using the fact that $\h=\hn\n$ and $\dv\h-\n\cdot D\h\n=: \dv_\tau (\hn\n)=H\hn$ (c.f. Equation (5.22), page 366 of \cite{SG}) we get 
$$
(A)+(B)= \isl f' \hn + \isl (\dn f+ Hf)h_n^2.
$$
Substituting $f(t)=\sg_-|\gr u_t|^2$ yields
$$
(A)+(B)= 2 \isl \sg_- \gr u_- \cdot \gr u_-' \hn + 2\isl \sg_- \dn u_- \partial_{nn}^2 u_- h_n^2 + \isl \sg_- |\gr u_-|^2 H h_n^2.
$$
The calculation for the integral over $\Omega\setminus (\br)_t$ in \eqref{ltuu} is analogous.
We conclude that 
$$
\ldue= -2 \isl \sg_- \dn u_- \jump{\dn u'}\hn -2\isl \sg_- \dn u_- \jump{\partial_{nn}^2 u}h_n^2- \isl\sg_- \dn u_-\jump{\dn u} H h_n^2.
$$
\end{proof}

\section[Classification of the critical shape $\br$:\\ Proof of Theorem \ref{DUE} ]{Classification of the critical shape $\br$:\\ \large{Proof of Theorem \ref{DUE}}}

In order to classify the critical shape $\br$ of the functional $E$ under the volume constraint we will use the expansion shown in \eqref{expansion}.
For all $\Phi\in\B$ and $t>0$ small, it reads
\begin{equation}\label{eex}
E\left( \Phi(t)(\br)\right)= E(\br)+ \frac{t^2}{2}\left(   \ldue-\isl \jump{\sg |\gr u|^2} Z  \right)+o(t^2).
\end{equation}
Employing the use of the second order volume preserving condition \eqref{2nd} and the fact that, by symmetry, the quantity $\jump{\sg |\gr u|^2}$ is constant on the interface $\bbr$ we have
$$
-\isl \jump{\sg |\gr u|^2} Z = \isl \jump{\sg |\gr u|^2} H h_n^2.
$$
Combining this with the result of Theorem \ref{lduu} yields
$$
E\left( \Phi(t)(\br)\right)= E(\br)+ {t^2}{\left\{  -\isl \sg_-\dn u_-\jump{\dn u'}\hn -\isl \sg_-\dn u_- \jump{\partial_{nn}^2 u}h_n^2         \right\}}+o(t^2).
$$
We will denote the expression between braces in the above by $Q(\hn)$. Since $u'$ depends linearly on $\hn$ (see \eqref{u'pb}), it follows immediately that $Q(\hn)$ is a quadratic form in $\hn$.
Since both $u$ and $u'$ verify the transmission condition (see \eqref{pb2} and \eqref{u'pb}) we have 
$$
\sg_-\dn u_-\jump{\dn u'}=\jump{\sg\dn u\dn u'}= \sg_-\dn {u'}_-\jump{\dn u} \text{ on }\bbr.
$$
Using the explicit expression of $u$ given in \eqref{uexpl}, after some elementary calculation we write
$$
Q(\hn)=\frac{R}{N}\left(\frac{1}{\sg_-}-\frac{1}{\sg_+}\right)\left( -\isl \sg_- \dn {u'}_- \hn +\frac{1}{N}\isl h_n^2  \right).
$$ 

In the following we will try to find an explicit expression for $u'$. To this end we will perform the spherical harmonic expansion of the function $\hn:\bbr \to \R$.
We set 
\begin{equation}\label{hexp}
\hn(R\theta)=\sum_{k=1}^\infty \sum_{i=1}^{d_k} \alpha_{k,i} Y_{k,i}(\theta) \quad \text{ for all } \theta\in \partial B_1 
.
\end{equation}
The functions $Y_{k,i}$ are called {\em spherical harmonics} in the literature. They form a complete orthonormal system of $L^2(\partial B_1)$ and are defined as the solutions of the following eigenvalue problem:
$$
-\lap_\tau Y_{k,i}=\lambda_k Y_{k,i} \quad \text{ on }\partial B_1,\\
$$
where $\lap_\tau:= \dv_\tau \gr_\tau$ is the Laplace-Beltrami operator on the unit sphere.
We impose the following normalization coniditon 
\begin{equation}\label{normalization}
\int_{\partial B_1} Y_{k,i}^2=R^{1-N}.
\end{equation}
The following expressions for the eigenvalues $\lambda_k$ and the corresponding multiplicities $d_k$ are also known:
\begin{equation}\label{lambdak}
\lambda_k= k(k+N-2), \quad \quad d_k= \binom{N+k-1}{k}-\binom{N+k-2}{k-1}.
\end{equation}
Notice that the value $k=0$ had to be excluded from the summation in \eqref{hexp} because $\hn$ verifies the first order volume preserving condition \eqref{1st}.

Let us pick an arbitrary $k\in\{1,2,\dots\}$ and $i\in\{1,\dots, d_k\}$. We will use the method of separation of variables to find the solution of problem \eqref{u'pb} in the particular case when $\hn(R\theta)=Y_{k,i}(\theta)$, for all $\theta\in \partial B_1$.
\begin{figure}[h]
\centering
\includegraphics[width=0.8\textwidth]{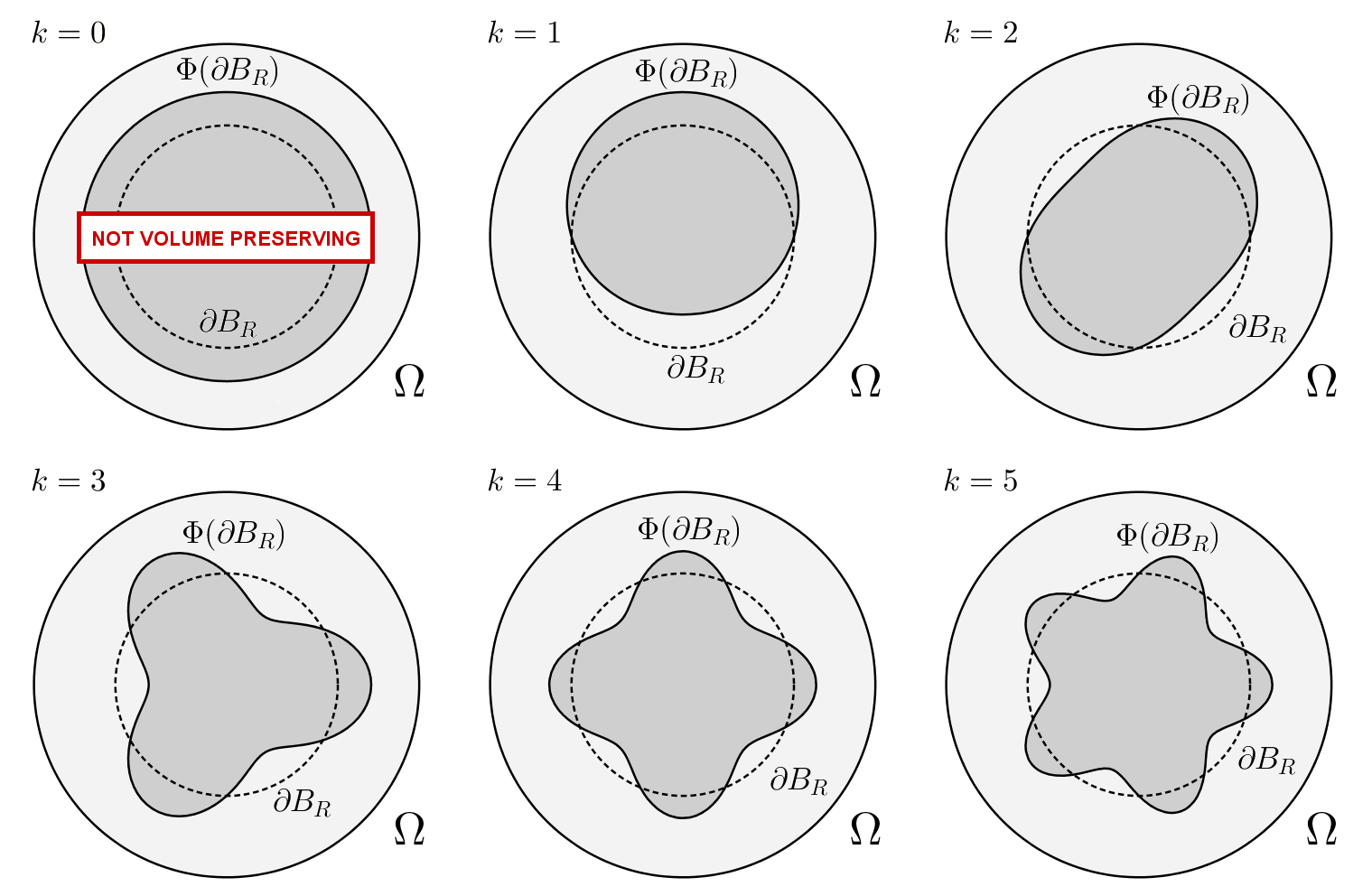}
\caption{How $\Phi(t)(B_R)$ looks like for small $t$ when $\hn(R\cdot)=Y_{k,i}$, in 2 dimensions.}
\label{sphar}
\end{figure}

Set $r:=|x|$ and, for $x\ne 0$, $\theta:=x/|x|$. 
We will be searching for solutions to \eqref{u'pb} of the form $u'=u'(r,\theta)=f(r)g(\theta)$.
Using the well known decomposition formula for the Laplacian into its radial and angular components, the equation $\lap u'=0$ in $\br \cup (\Omega\setminus \overline{\br})$ can be rewritten as
$$
0=\lap u'(x) = f_{rr}(r)g(\theta)+\frac{N-1}{r}f_r(r)g(\theta)+\frac{1}{r^2}f(r)\lap_{\tau}g(\theta) \;\text{for }r\in(0,R)\cup (R,1),\, \theta\in\partial B_1.
$$
Take $g=Y_{k,i}$.
Under this assumption, we get the following equation for $f$:
\begin{equation}\label{f}
f_{rr}+\frac{N-1}{r}f_r-\frac{\lambda_k}{r^2}f=0 \quad\text{in } (0,R)\cup (R,1).
\end{equation}
It can be easily checked that the solutions to the above consist of linear combinations of  $r^\eta$ and $r^\xi$, where
\begin{equation}\label{xieta}
\begin{aligned}
\eta&=\eta(k)=\frac{1}{2} \left( 2-N + \sqrt{ (N-2)^2+4\lambda_k } \right)=k,\\
\xi&=\xi(k)= \frac{1}{2} \left( 2-N - \sqrt{ (N-2)^2+4\lambda_k } \right)=2-N-k.
\end{aligned}
\end{equation}
Since equation \eqref{f} is defined for $r\in (0,R)\cup (R,1)$, we have that the following holds for some real constants $A$, $B$, $C$ and $D$;
$$
f(r)= \begin{cases}
Ar^{2-N-k}+Br^k \quad&\text{for } r\in(0,R),\\
Cr^{2-N-k}+Dr^k \quad&\text{for } r\in(R,1).
\end{cases}
$$
Moreover, since ${2-N-k}$ is negative, $A$ must vanish, otherwise a singularity would occur at $r=0$.
The other three constants can be obtained by the interface and boundary conditions of problem \eqref{u'pb} bearing in mind that $u'(r,\theta)=f(r)Y_{k,i}(\theta)=f(r)h_n(R \theta)$.
We get the following system:
$$
\begin{cases}
C R^{2-N-k}+ DR^k-BR^k= -\frac{R}{N\sg_-} + \frac{R}{N \sg_+},\\
\sg_- kB  R^{k-1}= \sg_+ {(2-N-k)} C R^{{2-N-k}} + \sg_+ k D R^{k-1},\\
C+D=0.
\end{cases}
$$
Although this system of equations could be easily solved completely for all its indeterminates, we will just need to find the explicit value of $B$ in order to go on with our computations.
We have 
\begin{equation}\label{B}
B=B_k=\frac{R^{1-k}}{N\sg_-} \cdot \frac{k(\sg_--\sg_+)R^k-(2-N-k)(\sg_--\sg_+)R^{2-N-k}}{k(\sg_--\sg_+)R^k+((2-N-k)\sg_+-k\sg_-)R^{2-N-k}}.
\end{equation}
Therefore, in the particular case when $\hn(R\,\cdot)=Y_{k,i}$, 
$$
{u'}_-={u'}_-(r,\theta)=B_k r^k Y_{k,i}(\theta), \quad r\in[0,R), \; \theta\in\partial B_1,
$$ 
where $B_k$ is defined as in \eqref{B}.
By linearity, we can recover the expansion of ${u'}_-$ in the general case (i.e. when \eqref{hexp} holds):
\begin{equation}
\begin{aligned}
&{u'}_-(r,\theta)= \sum_{k=1}^\infty \sum_{i=1}^{d_k} \alpha_k B_k r^k Y_{k,i}(\theta), \quad r\in[0,R),\; \theta\in\partial B_1, \quad\text{and therefore} \\
&\dn {u'}_-(R,\theta)=\sum_{k=1}^\infty \sum_{i=1}^{d_k} \alpha_k B_k k R^{k-1} Y_{k,i}(\theta), \quad\quad\quad\theta\in\partial B_1.
\end{aligned}
\end{equation}
We can now diagonalize the quadratic form $Q$, in other words we can consider only the case $\hn(R\,\cdot)=Y_{k,i}$ for all possible pairs $(k,i)$. 
We can write $Q$ as a function of $k$ as follows:
\begin{equation}\label{Q}
\begin{aligned}
Q(\hn)=Q(k)=\frac{R}{N}\left(\frac{\sg_+-\sg_-}{\sg_+\sg_-}\right)\left(-\sg_-B_k k R^{k-1} +\frac{1}{N}\right)=\\
\frac{R}{N^2}\left( \frac{\sg_+-\sg_-}{\sg_+\sg_-}\right)\left(1- k \frac{k(\sg_--\sg_+)R^k-(2-N-k)(\sg_--\sg_+)R^{2-N-k}}{k(\sg_--\sg_+)R^k+((2-N-k)\sg_+-k\sg_-)R^{2-N-k}}  \right).
\end{aligned}
\end{equation}

The following lemma will play a central role in determining the sign of $Q(k)$ and hence proving Theorem \ref{mainthm}.
\begin{lem}\label{lemmone}
For all $R\in (0,1)$ and $\sg_-,\sg_+>0$, the function $k\mapsto Q(k)$ defined in \eqref{Q} is monotone decreasing for $k\ge 1$. 
\end{lem}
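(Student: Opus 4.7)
The plan is to reduce the lemma, through careful substitutions, to an elementary one-variable inequality.

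First I would introduce the shorthand $\delta := \sg_+-\sg_-$, $\mu := N+k-2$, $A := k+\mu = N+2k-2$, $\rho := R^A$, $B := k\rho+\mu$ and $L := -\ln R>0$. Multiplying the numerator and denominator of the big fraction in \eqref{Q} by $R^{\mu}$ (note that $R^{k}\cdot R^{\mu}=R^{A}=\rho$) collapses \eqref{Q} to the compact form
$$
Q(k)=\frac{R\,\delta}{N^2\sg_+\sg_-}\left(1-\frac{k\,\delta\, B}{P}\right),\qquad P := \sg_+ B + \sg_- k(1-\rho) = \sg_- A + \delta B.
$$
The leading constant drops out under differentiation, so $Q'(k)$ has the same sign as $-\delta^2\,(kB/P)'$. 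When $\sg_+=\sg_-$ one has $Q\equiv 0$; otherwise $\delta^2>0$ and the monotonicity of $Q$ is equivalent to $kB/P$ being strictly increasing on $k\ge 1$.

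Next I would compute $(kB)'P-kB\,P'$ and exploit a cancellation. Noting $\rho'=-2L\rho$ gives $B'=1+\rho-2kL\rho$ and $(kB)'=B+kB'$. Expanding $(kB)'P-kBP'$ and grouping terms by $\sg_+$ and $\sg_-$, the coefficient of $\sg_+$ collapses to $(B+kB')B-kBB'=B^2$, while the coefficient of $\sg_-$ reduces to $k^2\Psi$, where
$$
\Psi := B'(1-\rho)-2BL\rho.
$$
Hence $(kB)'P-kBP' = \sg_+ B^2 + \sg_- k^2\Psi$, so proving $\Psi>0$ suffices.

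Expanding and simplifying (the cross term $2kL\rho^{2}$ cancels) gives
$$
\Psi = 1-\rho^2-2LA\rho.
$$
Since $LA = -\ln R^{A} = -\ln\rho$, the substitution $\ell := -\ln\rho>0$ (valid because $R\in(0,1)$ and $A\ge N\ge 2$) reduces the claim to the elementary inequality
$$
\Phi(\ell) := 1-e^{-2\ell}-2\ell e^{-\ell} > 0 \quad\text{for all }\ell>0.
$$
One checks $\Phi(0)=0$ and $\Phi'(\ell)=2e^{-\ell}(e^{-\ell}+\ell-1)$; the inner auxiliary $\psi(\ell):=e^{-\ell}+\ell-1$ satisfies $\psi(0)=0$ and $\psi'(\ell)=1-e^{-\ell}>0$, so $\psi>0$ on $(0,\infty)$, whence $\Phi'>0$ and finally $\Phi>0$ on $(0,\infty)$. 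This concludes the proof.

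The main obstacle is the algebraic bookkeeping in the first two paragraphs: identifying the substitutions $A,B,\rho,\delta$ that collapse \eqref{Q} into $Q=C_1-C_2\cdot kB/P$ with $C_i$ independent of $k$, and then spotting the cancellation $\sg_+B^2+\sg_- k^2\Psi$ hidden inside $(kB)'P-kBP'$ (which relies on the fortunate identity $P = \sg_+B+\sg_-k(1-\rho)$). Once those structural identities are in place, the rest is a one-variable calculus exercise.
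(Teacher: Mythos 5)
Your proof is correct and follows essentially the same route as the paper: both reduce the claim to the strict monotonicity of the same rational function of $k$ (your $kB/P$ equals the paper's auxiliary function $j$ up to the constant factor $1/\sg_+$), and both verify positivity of the derivative's numerator via the elementary inequality $e^{\ell}-e^{-\ell}>2\ell$ for $\ell>0$ (your $1-e^{-2\ell}-2\ell e^{-\ell}>0$ is this inequality multiplied by $e^{-\ell}$). All the algebraic identities you state, including $P=\sg_+B+\sg_-k(1-\rho)=\sg_-A+\delta B$ and the decomposition $(kB)'P-kBP'=\sg_+B^2+\sg_-k^2\Psi$, check out.
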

\begin{proof}
Let us denote by $\rho$ the ratio of the the conductivities, namely $\rho:= \sg_-/\sg_+$.
We get 
$$
Q(k)=\frac{R}{N^2}\left( \frac{1-\rho}{\sg_-}\right)\left(1- k \frac{k(\rho-1)R^k-(2-N-k)(\rho-1)R^{2-N-k}}{k(\rho-1)R^k+((2-N-k)-k\rho)R^{2-N-k}}  \right).
$$
In order to prove that the map $k\mapsto Q(k)$ is monotone decreasing it will be sufficient to prove that the real function
$$
j(x):=x\frac{x-(2-N-x)R^{2-N-2x}}{(1-\rho)x+ \left(-2+N+x+\rho x \right)R^{2-N-2x}}
$$
is monotone increasing in the interval $(1,\infty)$. Notice that this does not depend on the sign of $\rho-1$. 
From now on we will adopt the following notation:
\begin{equation}\nonumber
L:=R^{-1}>1,\quad M:=N-2\ge 0,\quad P=P(x):= L^{2x+M}.
\end{equation}
Using the notation introduced above, $j$ can be rewritten as follows 
$$
j(x)=\frac{x^2+(x^2+Mx)P}{(1-\rho)x+(x+M+\rho x)P}.
$$
In order to prove the monotonicity of $j$, we will compute its first derivative and then study its sign.
We get 
$$
j'(x)=\frac{MP(MP+2Px+2x)+x^2(P+1)^2+\rho x^2P (P-1/P-4x\log(L)-2M\log(L))}{\left((1-\rho)x+(x+M+\rho x)P \right)^2}.
$$ 
The denominator in the above is positive and we claim that also the numerator is. To this end it suffices to show that the quantity multiplied by $\rho x^2P$ in the numerator, namely $P-1/P-4x\log(L)-2M\log(L)$, is positive for $x\in (1,\infty)$ (although, we will show a stronger fact, namely that it is positive for all $x>0$).
$$
\frac{d}{dx}\left(P-\frac{1}{P}-4x\log(L)-2M\log(L)\right)= 2{\log(L)}{\left(P+\frac{1}{P}-2\right)}>0 \quad\text{for }x>0,
$$
where we used the fact that $L>1$ and that $P\mapsto P+P^{-1}-2$ is a non-negative function vanishing only at $P=1$ (which does not happen for positive $x$).
We now claim that 
$$
\restr{\left(P-\frac{1}{P}-4x\log(L)-2M\log(L)\right)}{x=0}= L^M-\frac{1}{L^M}-2M \log(L)\ge 0.
$$
This can be proven by an analogous reasoning: treating $M$ as a real variable and differentiating with respect to it, we obtain
$$
\frac{d}{dM}\left( L^M-\frac{1}{L^M}-2M \log(L) \right) = \log(L)\left(L^M+\frac{1}{L^M}-2\right)\ge 0
$$
(notice that the equality holds only when $M=0$), moreover,
$$
\restr{\left( L^M-\frac{1}{L^M}-2M \log(L) \right)}{M=0}=0,
$$
which proves the claim.
\end{proof}
We are now ready to prove the main result of the paper.
\begin{thm}\label{mainthm}
Let $\sg_-,\sg_+>0$ and $R\in(0,1)$. If $\sg_->\sg_+$
then 
$$
\frac{d^2}{dt^2}\restr{E\big(\Phi(t)(B_R)\big)}{t=0}<0 \quad \text{ for all }\Phi\in\B.
$$

Hence, $B_R$ is a local maximizer for the functional $E$ under the fixed volume constraint. 
On the other hand, if $\sg_-<\sg_+$, then there exist some $\Phi_1$ and $\Phi_2$ in $\B$, such that 
$$
\frac{d^2}{dt^2}\restr{E\big(\Phi_1(t)(B_R)\big)}{t=0}<0,\quad \frac{d^2}{dt^2}\restr{E\big(\Phi_2(t)(B_R)\big)}{t=0}>0.
$$
In other words, $B_R$ is a saddle shape for the functional $E$ under the fixed volume constraint. 
\end{thm}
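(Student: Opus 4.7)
The strategy is to combine the spherical-harmonic diagonalisation of the quadratic form $Q$ with the monotonicity provided by Lemma \ref{lemmone}, thereby reducing the question to computing $Q(1)$ together with the asymptotics of $Q(k)$ as $k\to\infty$. For $\Phi\in\B$ the expansion \eqref{eex}, once the second order volume preserving condition \eqref{2nd} has been used, reads $\frac{d^{2}}{dt^{2}}|_{t=0}E(\Phi(t)(B_R))=2Q(h_n)$. Substituting the spherical-harmonic expansion \eqref{hexp} of $h_n$ and the explicit formula for $u'_-$ obtained by separation of variables into the definition of $Q$, the orthonormality relation \eqref{normalization} decouples $Q$ as
$$
Q(h_n)\;=\;\sum_{k=1}^{\infty}\sum_{i=1}^{d_k}\alpha_{k,i}^{2}\,Q(k),
$$
so the sign of the second derivative is completely governed by the sequence $\{Q(k)\}_{k\ge 1}$ of \eqref{Q}.

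Two elementary computations then anchor this sequence. Setting $k=1$ in \eqref{Q} and multiplying numerator and denominator of the fraction by $R^{N-1}$ to clear the negative exponents collapses $Q(1)$ to the closed form
$$
Q(1)\;=\;\frac{R\,\rho\,(1-\rho)}{N\,\sigma_-\,\bigl[(N-1+\rho)+(1-\rho)R^{N}\bigr]},\qquad \rho:=\sigma_-/\sigma_+;
$$
a quick inspection shows the bracket is positive in every regime (both terms are nonnegative when $\rho\le 1$, while $(\rho-1)R^{N}<\rho-1\le N-1+\rho$ when $\rho>1$), so that $\operatorname{sgn}Q(1)=\operatorname{sgn}(\sigma_+-\sigma_-)$. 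Secondly, multiplying numerator and denominator of the fraction in \eqref{Q} by $R^{k+N-2}$ (which tends to $0$ as $k\to\infty$ since $R<1$) shows that $k$ times the fraction behaves like $k(1-\rho)/(1+\rho)$, from which a brief sign check yields $Q(k)\to-\infty$ regardless of the sign of $1-\rho$.

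The theorem now follows by case analysis. If $\sigma_->\sigma_+$, then $Q(1)<0$, and Lemma \ref{lemmone} upgrades this to $Q(k)\le Q(1)<0$ for every $k\ge 1$; summing against $\alpha_{k,i}^{2}$ yields $Q(h_n)<0$ for every non-trivial $h_n$, and \eqref{eex} forces $B_R$ to be a strict local maximiser. If instead $\sigma_-<\sigma_+$, then $Q(1)>0$, whereas the divergence $Q(k)\to-\infty$ produces some $k_{*}$ with $Q(k_{*})<0$. To promote these eigendirections to actual deformations in $\B$, extend each chosen spherical harmonic $Y_{1,i}$ (resp.\ $Y_{k_{*},i}$) to a smooth compactly supported normal vector field in a neighbourhood of $\partial B_R$; since every $Y_{k,i}$ with $k\ge 1$ satisfies \eqref{1st}, the volume-restoring construction in the remark following \eqref{2nd} converts these into $\Phi_{1},\Phi_{2}\in\B$ whose first-order normal components are exactly the prescribed harmonics, hence with second derivatives of opposite sign.

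The only genuinely delicate step in this programme is Lemma \ref{lemmone}; once the monotone decreasing behaviour of $k\mapsto Q(k)$ on $[1,\infty)$ is granted, everything reduces to the algebraic identities above and the standard diagonal-quadratic-form argument.
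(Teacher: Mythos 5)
Your proposal is correct and follows essentially the same route as the paper: diagonalise $Q$ over the spherical harmonics, verify that $\operatorname{sgn}Q(1)=\operatorname{sgn}(\sigma_+-\sigma_-)$, invoke Lemma \ref{lemmone} to propagate negativity to all $k\ge 1$ when $\sigma_->\sigma_+$, and use $Q(k)\to-\infty$ together with $Q(1)>0$ for the saddle case. Incidentally, your closed form for $Q(1)$ with denominator $(N-1+\rho)+(1-\rho)R^{N}$ is the correct simplification of \eqref{Q} (the paper's display \eqref{quuno} contains a small algebraic slip in that denominator), but since both expressions are positive for all $N\ge 2$, $R\in(0,1)$, $\rho>0$, the sign conclusion and hence the theorem are unaffected.
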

\begin{proof}
\begin{figure}[h]
\centering
\includegraphics[scale=0.5]{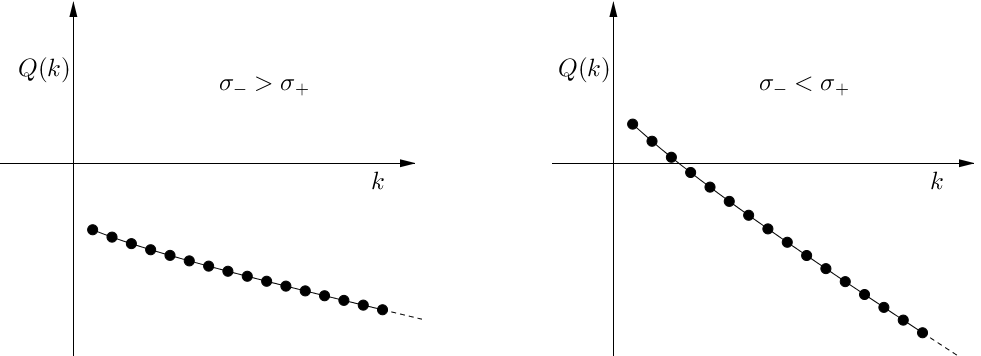}
\label{q}
\end{figure}
We have 
\begin{equation}\label{quuno}
Q(1)=\frac{R}{N^2}\left( \frac{1-\rho}{\sg_-}\right) \frac{N\rho}{\rho(R^N+1)     +N-R^N-1}.
\end{equation}
Since $N\ge 2$, $R\in (0,1)$, we have $N-R^N-1>0$ and therefore it is immediate to see that $Q(1)$ and $1-\rho$ have the same sign. 

If $\sg_->\sg_+$, then, by Lemma \ref{lemmone}, we get in particular that $Q(k)$ is negative for all values of $k\ge 1$. This implies that the second order shape derivative of $E$ at $B_R$ is negative for all $\Phi\in\B$ and therefore $B_R$ is a local maximizer for the functional $E$ under the fixed volume constraint as claimed.

On the other hand, if $\sg_-<\sg_+$, by \eqref{quuno} we have $Q(1)>0$. 
An elementary calculation shows that, for all $\sg_-,\sg_+>0$,    
$$
\lim_{k\to\infty} Q(k)=-\infty.
$$
Therefore, when $\sg_-<\sg_+$, $B_R$ is a saddle shape for the functional $E$ under the fixed volume constraint.
\end{proof}

\section{The surface area preserving case}
The method employed in this paper can be applied to other constraints without much effort. For instance, it might be interesting to see what happens when volume preserving perturbations are replaced by surface area preserving ones. Is $B_R$ a critical shape for the functional $E$ even in the class of domains of fixed surface area? If so, of what kind?
We set $\per(D):=\int_{\partial D} 1$ for all smooth bounded domain $D\subset \rn$. 
The following expansion for the functional $\per$ can be obtained just as we did for \eqref{volex}:
\begin{equation}\nonumber
\per(\omega_t)=\per(\omega)+ t \int_{\partial\omega}H \hn + \frac{t^2}{2} \left( l_2^\per(\omega) (\hn,\hn) + \int_{\partial\omega}H Z     \right) + o(t^2) \text{ as }t\to0,
\end{equation}
where, (cf. \cite{henrot}, page 225)  
\begin{equation}\label{l2per}
l_2^\per(\omega)(\hn,\hn)=\int_{\partial \omega} |\gr_\tau \hn|^2 +\int_{\partial \omega} \left( H^2 - {\rm tr}\big((D_\tau \n)^T D_\tau \n\big) \right) h_n^2.
\end{equation}
We get the following first and second order surface area preserving conditions. 
\begin{align}\label{1st2nd}
&\int_{\partial \omega}H \hn =0, 
&\int_{\partial \omega} |\gr_\tau \hn|^2 +\int_{\partial \omega} \left( H^2 - {\rm tr}\big((D_\tau \n)^T D_\tau \n\big) \right) h_n^2+ \int_{\partial \omega}H Z=0. 
\end{align}

Notice that when $\omega=B_R$, the first order surface area preserving condition is equivalent to the first order volume preserving condition \eqref{1st} and therefore, by Theorem \ref{thm1}, $B_R$ is a critical shape for $E$ under the fixed surface area constraint as well.   

The study of the second order shape derivative of $E$ under this constraint is done as follows. Employing the use of \eqref{eex} together with the second order surface area preserving condition in \eqref{1st2nd} we get
$$
\frac{d^2}{dt^2}\restr{E\big(\Phi(t)(B_R)\big)}{t=0}= l_2^E(B_R)(\hn,\hn) + \frac{\jump{\sg |\gr u|^2}}{H} l_2^\per(B_R)(\hn,\hn).
$$
In other words, we managed to write the shape Hessian of $E$ as a quadratic form in $\hn$. We can diagonalize it by considering $\hn(R\cdot)=Y_{k,i}$ for all possible pairs $(k,i)$, where we imposed again the normalization \eqref{normalization}. Under this assumption, by \eqref{lambdak} we get
$$
\int_\bbr |\gr_\tau \hn|^2= \frac{\lambda_k}{R^2}=\frac{k(k+N-2)}{R^2}.
$$  
We finally combine the expression for $l_2^E(B_R)$ of Theorem \ref{lduu} with that of $l_2^\per$ \eqref{l2per} to obtain
$$
E\big(\Phi(t)(B_R)\big)= E(B_R) + t^2 \,\widetilde{Q}(k) + o(t^2) \text{ as }t\to 0,
$$
where
$$
\widetilde{Q}(k)=\frac{R}{N^2}\left( \frac{1-\rho}{\sg_-}\right)\left(\frac{3}{2}     - \frac{k(k+N-2)}{2(N-1)} -k\,\frac{k(\rho-1)R^k-(2-N-k)(\rho-1)R^{2-N-k}}{k(\rho-1)R^k+((2-N-k)-k\rho)R^{2-N-k}} \right).
$$

It is immediate to check that $\widetilde{Q}(1)=Q(1)$ and therefore, $\widetilde{Q}(1)$ is negative for $\sg_->\sg_+$ and positive otherwise. On the other hand, $\lim_{k\to\infty} \widetilde{Q}(k)=\infty$ for $\sg_->\sg_+$ and $\lim_{k\to\infty} \widetilde{Q}(k)=-\infty$ for $\sg_-<\sg_+$. In other words, under the surface area preserving constraint $B_R$ is always a saddle shape, independently of the relation between $\sg_-$ and $\sg_+$. 
\begin{figure}[h]
\centering
\includegraphics[scale=0.5]{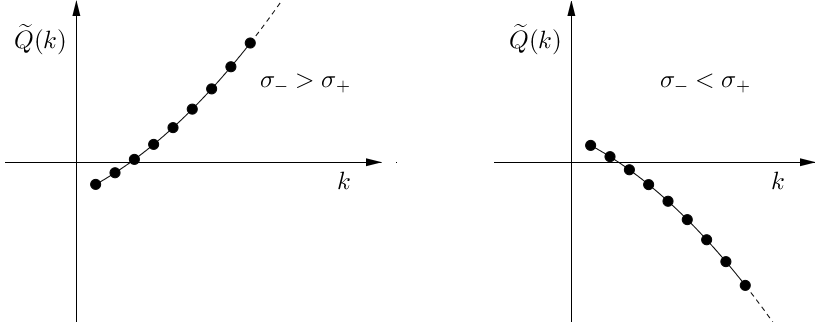}
\label{qtil}
\end{figure}

We can give the following geometric interpretation to this unexpected result. 
Since the case $k=1$ corresponds to deformations that coincide with translations at first order, it is natural to expect a similar behaviour under both volume and surface area preserving constraint. On the other hand, high frequency perturbations (i.e. those corresponding to a very large eigenvalue) lead to the formation of indentations in the surface of $B_R$. Hence, in order to prevent the surface area of $B_R$ from expanding, its volume must inevitably shrink (this is due to the higher order terms in the expansion of $\Phi$). This behaviour can be confirmed by looking at the second order expansion of the volume functional under the effect of a surface area preserving transformation $\Phi\in\A$ on the ball:
$$
\vol\big(\Phi(t)(\br)\big)=\vol(\br)+\frac{t^2}{2}\left( \frac{1}{R}-\frac{k(k+N-2)}{(N-1)R} \right) +o(t^2) \text{ as }t\to 0.
$$
We see that the second order term vanishes when $k=1$, while getting arbitrarily large for $k\gg1$.
Since this shrinking effect becomes stronger the larger $k$ is, this suggests that the behaviour of $E\big(\Phi(t)(B_R)\big)$ for large $k$ might be approximated by that of the extreme case $\omega=\emptyset$. For instance, when $\sg_->\sg_+$ we have that $E(B_R)<E(\emptyset)$ and this is coherent with what we found, namely $\widetilde{Q}(k)>0$ for $k\gg 1$.

\section*{Acknowledgments} 
This paper is prepared as a partial fulfillment of the author's doctor's degree at Tohoku University.
The author would like to thank Professor Shigeru Sakaguchi (Tohoku University) for his precious help in finding interesting problems and for sharing his naturally optimistic belief that they can be solved.
Moreover we would like to thank the anonymous referee, who suggested to study the surface area preserving case and helped us find a mistake in our calculations. Their detailed analysis and comments on the previous version of this paper, contributed to make the new version shorter and more readable.

\bigskip
\noindent
\textsc{
Research Center for Pure and Applied Mathematics, Graduate
School of
Information Sciences, Tohoku University, Sendai 980-8579
, Japan.
} \\
\noindent
{\em Electronic mail address:}
cava@ims.is.tohoku.ac.jp

\end{document}